\newcommand\PrologPredicateStyle{}
\newcommand\PrologVarStyle{}
\newcommand\PrologAnonymVarStyle{}
\newcommand\PrologAtomStyle{}
\newcommand\PrologOtherStyle{}
\newcommand\PrologCommentStyle{}
\newif\ifpredicate@prolog@
\newif\ifwithinparens@prolog@
\newcommand\@testChar@prolog%
\newcommand\detectTypeAndHighlight@prolog
  \def\lst@thestyle{\PrologAtomStyle}%
    \def\lst@thestyle{\PrologPredicateStyle}%
\splitfirstchar@prolog\expandafter{\the\lst@token}%
        \let\lst@thestyle\PrologAnonymVarStyle%
        \let\lst@thestyle\PrologVarStyle%
          \let\lst@thestyle\PrologVarStyle%
          \let\iterate\relax
\newcommand\splitfirstchar@prolog{}
\def\splitfirstchar@prolog#1{\@splitfirstchar@prolog#1\relax}
\newcommand\@splitfirstchar@prolog{}
\def\@splitfirstchar@prolog#1#2\relax{\def\@testChar@prolog{#1}}
\def\beginlstdelim#1#2%
  \def\endlstdelim{\PrologOtherStyle #2\egroup}%
  {\PrologOtherStyle #1}%
\newcommand\lang@prolog{Prolog-pretty}
\lstdefinelanguage\expandafter%
\newcommand\@ddedToOutput@prolog\relax
    \let\@ddedToOutput@prolog\@testChar@prolog%
\definecolor{PrologPredicate}{RGB}{000,031,255}
\definecolor{PrologVar}      {RGB}{024,021,125}
\definecolor{PrologAnonymVar}{RGB}{000,127,000}
\definecolor{PrologAtom}     {RGB}{186,032,032}
\definecolor{PrologComment}  {RGB}{063,128,127}
\definecolor{PrologOther}    {RGB}{000,000,000}
\renewcommand\PrologPredicateStyle{\color{PrologPredicate}}
\renewcommand\PrologVarStyle{\color{PrologVar}}
\renewcommand\PrologAnonymVarStyle{\color{PrologAnonymVar}}
\renewcommand\PrologAtomStyle{\color{PrologAtom}}
\renewcommand\PrologCommentStyle{\itshape\color{PrologComment}}
\renewcommand\PrologOtherStyle{\color{PrologOther}}
\lstdefinestyle{Prolog-pygsty}
{
  language     = Prolog-pretty,
  upquote      = true,
  stringstyle  = \PrologAtomStyle,
  commentstyle = \PrologCommentStyle,
  literate     =
    {:-}{{\PrologOtherStyle :-}}2
    {,}{{\PrologOtherStyle ,}}1
    {.}{{\PrologOtherStyle .}}1
}
\definecolor{OliveGreen}{rgb}{0.8,0.83,0.6}
\definecolor{burntumber}{rgb}{0.54, 0.2, 0.14}
\definecolor{coolblack}{rgb}{0.0, 0.18, 0.39}
\definecolor{darkterracotta}{rgb}{0.8, 0.31, 0.36}
\definecolor{frenchbeige}{rgb}{0.65, 0.48, 0.36}
    \newcommand*{\SavedLstInline}{}
    \LetLtxMacro\SavedLstInline\lstinline
    \DeclareRobustCommand*{\lstinline}{%
      \ifmmode
        \let\SavedBGroup\bgroup
        \def\bgroup{%
          \let\bgroup\SavedBGroup
          \hbox\bgroup
        }%
      \fi
      \SavedLstInline
    }
\newcommand{\Z}{\mathbb{Z}}
\newcommand{\W}{\mathcal{W}}
\newcommand{\E}{\mathfrak{E}}
\newcommand{\BB}{\mathscr{B}}
\newcommand{\lst}{\lstinline}
\newtheorem{thm}{Theorem}[section]
\newtheorem{prop}[thm]{Proposition}
\newtheorem{cor}[thm]{Corollary}
\newtheorem{lemma}[thm]{Lemma}
\crefname{mainthm}{thm}{Main Theorem}
\newtheorem*{mainthm}{Main Theorem}
\theoremstyle{definition}
\newtheorem{defin}[thm]{Definition}
\theoremstyle{remark}
\newtheorem{rmk}[thm]{Remark}
\newtheorem{rmks}[thm]{Remarks}
\newcommand{\B}{\mathcal{B}}
\newcommand{\C}{\mathcal{C}}
\title[Minimal Lottery designs]{Applying constraint programming to minimal lottery designs}
    \author[Cushing]{David Cushing}
    \address{Department of Mathematics, The University of Manchester, Manchester, UK}
    \email{david.cushing@manchester.ac.uk}
    \author[Stewart]{David I. Stewart*}
    \address{Department of Mathematics, The University of Manchester, Manchester, UK}
    \email{david.i.stewart@manchester.ac.uk}
    \let\svthefootnote\thefootnote
\begin{document}

\begin{abstract}
    We develop and deploy a set of constraints for the purpose of calculating minimal sizes of lottery designs. Specifically, we find the minimum number of tickets of size six which are needed to match at least two balls on any draw of size six, whenever there are at most $70$ balls.
    \end{abstract}
    
    \maketitle
    
    \section{Introduction}
    An\let\thefootnote\relax\footnote{* Corresponding author}
    %\addtocounter{footnote}{-1}
    \let\thefootnote\svthefootnote 
    $(n,k,p,t;j)$-lottery design is a hypergraph $H=(X,\B)$ where $X$ denotes a set of vertices of order $n$, and $\B$ a set of subsets of $X$ with size $|\B|=j$; furthermore, $H$ should be $k$-uniform---that is, $|B|=k$ for any $B\in \B$; and for any subset $D$ of $X$ of order $p$, we have $|D\cap B|\geq t$ for some $B\in \B$. More colloquially, $X$ is a set of balls labelled $1$ to $n$, and $\B$ is a set of $j$ tickets each containing $k$ of these numbers, such that for any draw $D$ of $p$ distinct balls from $X$, we can find at least one ticket $B\in \B$ which matches $D$ in at least $t$ places. Mostly because of a consequence of its application to the UK national lottery, this paper is concerned with finding the minimum value $L(n,k,p,t)$ of $j$ when $k=p=6$ and $t=2$. 
    
    In \cite{BateRees}, the numbers $L(n,6,6,2)$ are calculated for $n\leq 54$; however, the proof contains some gaps. We correct and refine the methods of \cite{BateRees} both theoretically and computationally, with the following result:
    
    \begin{mainthm}\label{mainthm}
        The values of $L(n,6,6,2)$ for $n\leq 70$ are listed in  \cref{theconfigs}.
    \end{mainthm}
    The proof of the theorem has two parts. One part is to supply a $(n,6,6,2;j)$-lottery design with $j$ taking the claimed value for $L(n,6,6,2)$ in \cref{theconfigs}; the other, much harder part, is to show that there is no lottery design using fewer blocks (i.e.~tickets). The first part we achieve by describing for each $n$ a  design constructed as a disjoint union of \emph{covering designs}---see \cref{sec:uppbound} for the definition. We discovered that our upper bounds had been found already\footnote{\url{https://lottodesigns.altervista.org/}} but supplied without any rationale; by contrast, the constructions in \cref{theconfigs} are completely transparent. 
    
    Most of this paper is dedicated to the second part of the proof: finding tight lower bounds, the majority of which are new. On rare occasions, we do get lucky by using a general result. The Handbook of Combinatorial Designs \cite[\S32]{Hand} gives the following theorem \cite[Thm.~1]{Furedi} as a highlight, which furnishes us with a highly useful lower bound.

\begin{thm}[Furedi--Sz\'ekely--Zubor]\label{FurediBound} We have 
    \[L(n, k, p, 2) \geq \frac{1}{k}\cdot\min_{\sum_{i=1}^{p-1} a_{i} = n }\left(  \sum_{i=1}^{p-1} a_{i}\left\lceil \frac{a_{i} - 1}{k-1} \right\rceil\right).\]
\end{thm}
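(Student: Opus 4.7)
The first step is to translate the design into a graph-theoretic object. Given a valid $(n,k,p,2;j)$-design $\B$, define the \emph{link graph} $G = (X, E)$ on the set of balls, where $\{u,v\} \in E$ iff some $B \in \B$ contains both $u$ and $v$. The defining property of the design is then equivalent to $\alpha(G) \leq p-1$: any independent $p$-set in $G$ would be a draw $D$ with $|D \cap B| \leq 1$ for every block, contradicting the design condition.

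Next comes a double-counting step. For each $v \in X$, let $r_v$ denote the number of blocks containing $v$, so $\sum_{v} r_v = kj$. Each of the $r_v$ blocks through $v$ contributes at most $k-1$ distinct neighbors of $v$ in $G$, hence $\deg_G(v) \leq (k-1)r_v$ and so $r_v \geq \lceil \deg_G(v)/(k-1)\rceil$. Summing,
\[ kj \;=\; \sum_{v \in X} r_v \;\geq\; \sum_{v \in X} \left\lceil \frac{\deg_G(v)}{k-1}\right\rceil. \]

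The remainder of the proof is purely graph-theoretic: one must show that for every graph $G$ on $n$ vertices with $\alpha(G) \leq p - 1$,
\[ \sum_{v \in X} \left\lceil \frac{\deg_G(v)}{k-1}\right\rceil \;\geq\; \min_{\sum_{i=1}^{p-1} a_i = n} \left(\sum_{i=1}^{p-1} a_i \left\lceil \frac{a_i - 1}{k-1}\right\rceil\right). \]
Equality is attained by disjoint unions $K_{a_1} \sqcup \cdots \sqcup K_{a_{p-1}}$ (allowing $a_i = 0$), since such a graph has independence number equal to the number of non-empty parts and each vertex in $K_{a_i}$ has degree exactly $a_i - 1$, contributing $a_i \lceil (a_i-1)/(k-1)\rceil$ to the left-hand side.

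This inequality is the main obstacle. An ideal argument would partition $X$ into cliques $V_1, \ldots, V_{p-1}$ with $|V_i|=a_i$, so that each $v \in V_i$ has $\deg_G(v) \geq a_i - 1$; but a graph with $\alpha(G) \leq p-1$ is not in general $(p-1)$-clique-partitionable (consider $C_5$ when $p=3$), so a cruder partition---such as the one obtained by taking a maximum independent set $\{v_1, \ldots, v_s\}$ and assigning each remaining vertex to some $v_i$ it is adjacent to---only guarantees that \emph{one} vertex in each part, namely $v_i$, has the required degree. A more delicate argument is therefore needed: for instance, induction on $p$ exploiting the fact that $\alpha(G[X \setminus N[v]]) \leq p-2$ for every $v$, or a direct compression argument that transforms $G$ step-by-step into a disjoint union of cliques without ever increasing the ceiling-sum. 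We follow the original proof of F\"uredi--Sz\'ekely--Zubor in \cite{Furedi}.
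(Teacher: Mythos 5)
The paper does not actually prove this theorem: it is imported verbatim from F\"uredi--Sz\'ekely--Zubor \cite{Furedi} (via the Handbook of Combinatorial Designs), so there is no in-paper argument to compare yours against. Your reduction is nonetheless the correct opening of the original proof: passing to the link graph $G$, observing that the lottery condition with $t=2$ says exactly that $G$ has no independent set of size $p$, and double-counting incidences to get $kj\ge\sum_v\lceil \deg_G(v)/(k-1)\rceil$ are all sound (each block through $v$ contributes at most $k-1$ distinct neighbours, and $r_v$ is an integer, so taking the ceiling is justified).

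However, as a proof your proposal has a genuine gap, which you yourself flag: the inequality
\[
\sum_{v}\left\lceil \frac{\deg_G(v)}{k-1}\right\rceil \;\ge\; \min_{\sum_{i=1}^{p-1} a_i=n}\left(\sum_{i=1}^{p-1}a_i\left\lceil\frac{a_i-1}{k-1}\right\rceil\right)
\]
for every $n$-vertex graph with no independent set of size $p$ is the entire content of the theorem, and you do not establish it. Your diagnosis of why the easy argument fails is accurate: a graph with independence number at most $p-1$ need not be partitionable into $p-1$ cliques ($C_5$ with $p=3$ is exactly the right counterexample), so one cannot simply lower-bound each vertex degree by $a_i-1$ within a part. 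The repairs you list (induction on $p$ via the graph induced on the complement of a closed neighbourhood, or a compression argument toward disjoint unions of cliques) are plausible directions but are not carried out, and closing with ``we follow the original proof'' is a citation, not a proof. In the context of this paper that is defensible --- the authors likewise take the theorem on faith from \cite{Furedi} --- but you should be explicit that what you have written establishes only the reduction to the graph-theoretic lemma, not the lemma itself.
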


Sometimes this lower bound for $L(n,6,6,2)$ is matched by an upper bound coming from covering designs and so $L(n,6,6,2)$ is known without any work. 

    \begin{cor}For $32\leq n\leq 70$, there always exists a $(n,6,6,2;j)$-lottery design with $j$ minimal which is a disjoint union of five $(a_i,6,2)$-covering designs. Hence there exist $\{a_1,\dots,a_5\}$ with $\sum a_i=n$  and
        \[L(n,6,6,2)=\sum_i C(a_i,6,2).\]
    \end{cor}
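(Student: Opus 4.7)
The plan is to establish the corollary by matching upper and lower bounds for $L(n,6,6,2)$ in the range $32 \le n \le 70$, treating the exact value supplied by the Main Theorem as known. What remains is, for each such $n$, to exhibit a partition $n = a_1 + \cdots + a_5$ whose associated disjoint union of optimal covering designs is a valid lottery design of size equal to that value.

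First I would verify the construction. Partition the ball set $X$ into five colour classes $X_1, \dots, X_5$ with $|X_i| = a_i$, and on each $X_i$ place an optimal $(a_i,6,2)$-covering design. Any draw $D \subset X$ of six balls satisfies, by pigeonhole, $|D \cap X_i| \ge \lceil 6/5 \rceil = 2$ for some $i$; the covering design on $X_i$ then supplies a block meeting $D \cap X_i$ (hence $D$) in at least two elements. This produces a valid $(n,6,6,2;j)$-lottery design with $j = \sum_i C(a_i,6,2)$, giving the upper bound $L(n,6,6,2) \le \sum_i C(a_i,6,2)$ for every composition $n = \sum a_i$ into five parts.

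Next I would invoke \cref{FurediBound} with $k=p=6$, which gives the matching lower bound
\[L(n,6,6,2) \ge \frac{1}{6} \min_{\sum a_i = n} \sum_{i=1}^{5} a_i \left\lceil \frac{a_i-1}{5} \right\rceil,\]
and combine it with the Sch\"onheim bound $C(a,6,2) \ge \lceil \frac{a}{6} \lceil \frac{a-1}{5}\rceil \rceil$. For each $n$ in the range, I would select a minimising five-part composition $\{a_1,\dots,a_5\}$ and verify from published tables of $C(v,6,2)$ that each $C(a_i,6,2)$ attains Sch\"onheim and that the sum $\sum_i C(a_i,6,2)$ coincides with the lower bound (equivalently, with the value in \cref{theconfigs}). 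Equality at both ends then forces $L(n,6,6,2) = \sum_i C(a_i,6,2)$ and identifies the extremal design as the claimed disjoint union.

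The main obstacle is the finite but case-heavy verification: one must show that for every $n \in \{32,\dots,70\}$ some optimising composition $\{a_i\}$ behaves well with respect to the Sch\"onheim bound, i.e.\ that the gap between $\sum \lceil \tfrac{a_i}{6}\lceil\tfrac{a_i-1}{5}\rceil\rceil$ and $\tfrac{1}{6}\sum a_i\lceil\tfrac{a_i-1}{5}\rceil$ closes and that no exotic $(n,6,6,2)$-design beats it. The content of the corollary is exactly that this check succeeds throughout the window; I expect some values of $n$ (near the boundaries $32$ and $70$, or near $n \equiv 2,3 \pmod 5$ where the Furedi sum can have a coarser minimum) to be the delicate cases, requiring enumeration of several candidate compositions and appeal to known small covering numbers from the Handbook.
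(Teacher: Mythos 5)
Your upper-bound construction is correct and is exactly the paper's \cref{upperbound}: partition $X$ into five classes, place an optimal $(a_i,6,2)$-covering design on each, and pigeonhole a six-ball draw into some class twice. The problem is your lower bound. You claim that the Fur\'edi--Sz\'ekely--Zubor bound (\cref{FurediBound}), possibly combined with the Sch\"onheim bound for coverings, matches the upper bound throughout $32\leq n\leq 70$, and you assert parenthetically that this lower bound ``coincides \ldots with the value in \cref{theconfigs}.'' It does not. Already for $n=35$ the minimising composition $6+6+6+6+11$ gives $\frac{1}{6}(24+22)=7.67$, i.e.\ a lower bound of $8$, while $L(35,6,6,2)=9$; for $n=59$ the composition $11+11+11+11+15$ gives $\frac{1}{6}(88+45)\approx 22.2$, i.e.\ a lower bound of $23$, while $L(59,6,6,2)=27$. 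The paper is explicit that the Fur\'edi bound is matched by the covering upper bound only ``sometimes''; if it were always matched in this range, the entire constraint-programming apparatus that occupies the rest of the paper would be unnecessary. So the step in which you ``verify \ldots that the sum $\sum_i C(a_i,6,2)$ coincides with the lower bound'' would simply fail for most $n$ in the window, and no amount of enumeration of compositions or appeal to Sch\"onheim closes that gap.

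There is also an internal inconsistency: you open by saying you will treat the exact value from the Main Theorem as known, but then attempt to re-derive the lower bound from \cref{FurediBound}. If you genuinely take the Main Theorem as given, the corollary needs no Fur\'edi argument at all: one only checks that each configuration listed in \cref{theconfigs} is a disjoint union of five diagrams from \cref{tickpics}, that each diagram is an $(a_i,6,2)$-covering design attaining the sharp value $C(a_i,6,2)$ from \cref{coveringnumbers} (the relevant $a_i$ are $6,8,9,10,12,13,14$, where sharpness is known), and that the total block count equals the tabulated $L(n,6,6,2)$. That is the paper's (implicit) proof. The genuinely hard content hiding behind this corollary is the lower bound supplied by the Main Theorem itself, which is established by the localisation-around-an-independent-set model and the constraint solver, not by any closed-form counting bound.
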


 %   Viable choices of covering designs are listed in \cref{sec:configs}.
        
 Perhaps of most interest here is that we make substantial use of the constraint programming library \cite{COC97} in SICStus Prolog\footnote{A free evaluation copy of SICStus can be downloaded from \url{https://sicstus.sics.se/}} \cite{sicstus}. 
 In that respect, this paper complements our earlier work \cite{CSS} as another application of constraint programming to pure mathematics problems; see also \cite{gkkln} for a novel approach using CP to a question about permutation groups. 
 It would be completely out of the question to approach this problem by brute force, which is typically all one has in combinatorics if not deploying CP: to rule out the possibility that $L(70,6,6,2)=34$, one starts with all $\begin{pmatrix}{c}70\\ 6\end{pmatrix}\sim 10^8$ possible tickets, from which one then needs to check every choice of $34$---about $3.4\times 10^{237}$ cases, many orders of magnitude bigger than the number of atoms in the observable universe.
    
\subsection*{The CP Model} Let us give a little more of an overview of the proof of the lower bounds for the main theorem, which will be in practice  the justification of the model for the constraint solver we use; the model is designed to achieve an {\sc\small UNSAT} conclusion when looking for an $(n,6,6,2;j)$-lottery design where $j$ is one less than the value claimed as best possible in \cref{theconfigs}. 

%In the process of this justification we put \cite{BateRees} on a more rigorous footing. 
For there is a philosophical principle guiding the design of the model that may be of general interest to constraint programmers, particularly those working with combinatorial designs. By way of giving it some contrast, let us start by imagining a na\"ive model. To rule out $L(70,6,6,2)=34$, one might take a $34\times 6$ matrix of variables, each with domain $\{1,\dots,70\}$---perhaps breaking some symmetry by constraining the matrix to be lex-chained and increasing---and such that each of the $\begin{pmatrix}{c}70\\ 6\end{pmatrix}\simeq 10^8$ possible draws has an intersection of size at least two with at least one of the $34$ tickets. Just to post all these the constraints would need the order of terabytes of data.

Our alternative approach follows the basic thrust of \cite{BateRees}. Let a subset $I\subseteq X$ of the vertices be called an \emph{independent set} if no block of the design intersects it more than once; i.e.~an independent set is a collection of balls no pair of which are found on the same ticket. Then what we do in effect is to localise the constraint problem around a maximal independent set $I$. Our model leverages a tension between the room available in the blocks which intersect $I$ non-trivially---which mostly depends on the number of such blocks, naturally---and the size of $I$. The size of $I$ is important, for if there is an independent set of size $6$ then it exhibits a draw for which our tickets fail: by definition none of our tickets intersects it more than once.

Seen another way, we focus on constraining the topography of the design that surrounds a well-chosen invariant feature of the design---in this case, a maximal independent set. Spinning the design around so as to stare closely at $I$ proves to be, in effect, a powerful symmetry-breaking technique. In fact, it is so good that achieving {\sc\small UNSAT} for the global model requires no labelling of variables that represent the individual vertices of the design; any such labelling work---which is non-trivial, but tractable---is subsumed by the local data in \cref{table:exs} below. 

It seems reasonable to imagine that in another combinatorial setting, a maximal independent set might be profitably substituted by another (hyper)graph-theoretic feature; one would look to localise around this new feature, using CP to do the hard work of enumerating the possible configurations locally, and then show that none can be made to work globally.

    %Where it cannot, we need to show that the upper bound $j$ we get from covering designs cannot be improved. So we assume that there exists a lottery design with $j-1$ blocks, with a view towards a contradiction. The non-existence of a draw of $6$ numbers intersecting all blocks in at most $1$ place is equivalent to the assertion about a lottery design that all  independent sets of vertices have order at most $5$. We can control the degrees of elements in $I$, which allows us to focus on the local data $\B_I$ where $\B_I$ denotes the set of all blocks meeting $I$. The consequent deliberations generate a fairly complex system of constraints which we implement in Prolog. It turns out that these constraints all lead to contradictions for $n\leq 70$.
    
    \begin{rmk}Let us finally describe the more mundane concern that inspired the paper. In October 2015, the UK national lottery (where $k=p=6$) underwent a change in which the number of balls  $n$ was increased from $49$ to $59$. By way of compensation, a `lucky dip' prize was introduced for matching two balls on a ticket. Consequently, the value of $L(59,6,6,2)$---namely $27$ by our main theorem---is somewhat significant. For the interested reader, \cref{27tix} also lists a minimal set of tickets explicitly.
    
    Having observed that the set of tickets we describe below would have netted \pounds 1810 in the lottery draw of 21 June 2023, the authors were motivated to road-test the tickets in the lottery draw of 1 July 2023; they matched just two balls on three of the tickets, the reward being three lucky dip tries on a subsequent lottery, each of which came to nothing. Since a ticket costs \pounds 2, the experiment represented a loss to the authors of \pounds 54. This unfortunate incident therefore serves both as a verification of our result and of the principle that one should expect to lose money when gambling. 
    
    For a more philosophical discussion of the National Lottery and its implementation for supporting charitable causes, we recommend David Runciman's article in the London Review of Books \cite{runc}.\end{rmk}

\section{Definitions and notation} 
A \emph{hypergraph} $H$ is a pair $(X,\B)$ with $X$ a set and $\B$ a set of subsets of $X$. We will refer to the elements $x\in X$ as \emph{vertices} and the elements $B\in\B$ as \emph{blocks}. The \emph{order} of $H$ is the cardinality of the set $X$. The \emph{size} of $H$ is the number of blocks;~i.e. the cardinality of the set $\B$. A hypergraph is said to be \emph{$k$-uniform} if each $B\in\B$ has order $k$. Note that a $2$-uniform hypergraph is a graph: a block $B=\{x,y\}$ identifies with an edge $x-y$.

\begin{defin}An \emph{$(n,k,p,t;j)$-lottery design} is a $k$-uniform hypergraph $H=(X,\B)$ of order $n$ and size $j$ such that  for any subset $D\subseteq X$ with $|D|=p$, there is at least one $B\in \B$ with $|B\cap D|\geq t$.

An \emph{$(n,k,p,t)$-lottery design} is an $(n,k,p,t;j)$-lottery design in which $j$ is minimal; we denote this minimal integer by $L(n,k,p,t)$.\end{defin}

\begin{rmk}Thinking of $X$ as a set of balls, $\B$ as a set of tickets, and $D$ as a draw, the definition says that in any draw of $p$ balls, at least one ticket matches at least $t$ of the balls in $D$.\end{rmk}

In order to avoid vacuous or incorrect statements we shall always insist $n\geq k\geq t\geq 2$.

We say a block $B$ is an \emph{$x$-block}---or that $B$ is \emph{incident} with a vertex $x$---if $x\in B$. The set $\B_x$ of all $x$-blocks is the \emph{star} of $x$. We define the function \[d:X\to\Z_{\geq 0};\ x\mapsto|\B_x|\] so that $d(x)$ is the \emph{degree} of $x$; i.e.~the number $|\B_x|$ of blocks incident with $x$. More generally, if $I$ is any subset of the vertices, then $B$ is an $I$-block if it is an $x$-block for some $x\in I$. We let $\B_I=\bigcup_{x\in I} \B_x$, and $d(I)=\bigcup_{x\in I} d(x)$ its degree. We will denote by $d_i:=|d^{-1}(i)|$ the number of vertices of degree $i$ in $H$. We also need to analyse the multiset of degrees of vertices in a subset $I\subseteq X$, so we define the function from the power set $\mathcal{P}(X)$ of $X$ to non-decreasing sequences of non-negative integers: \[\delta : \mathcal{P}(X)\to \bigcup_{r=0}^n(\Z_{\geq 0})^r;\ I\mapsto (d(x_1),\dots,d(x_n)),\text{ such that } d(x_1)\leq d(x_2)\leq \dots\leq d(x_n).\]
It will be convenient to assume that the elements of $I$ are always listed in order of their degree, so for example we may have $I=\{x_1,\dots,x_5\}$ such that $\delta(I)=(2,2,3,3,4)$ indicating $d(x_1)=2,\dots,d(x_5)=4$. 

Two vertices $x,y\in X$ are \emph{adjacent} if they are contained in a common block. The set of all vertices adjacent to $x$ is its \emph{neighbourhood} $N(x)$. A subset $I$ of $X$ is an \emph{independent set} or \emph{coclique} if no pair of elements in $I$ are adjacent. An independent set is \emph{maximal} if there is no independent set $J\subseteq X$ with $I\subsetneq J$.

\begin{rmks}\label{indsetrems}(i) If $I$ is an independent set in an $(n, k, p, t; x)$-lottery design, then $|I|<p$ or else there would be a draw $D=I$ matching at most $1$ vertex in any block (noting $t>1$). 
    
(ii) If $I$ is a maximal independent set then we must have $\B_I=X$ or else there is another element $y\in X$ which is not adjacent to any $x\in I$; which would imply there were a larger maximal independent set $I\cup \{y\}$.

(iii) It is easy to see that maximal independent sets need not have the same order. For example, in the graph $x-y-z$, $\{y\}$ is maximal independent, but $\{x,z\}$ is a  maximal independent set of higher cardinality. \end{rmks}

A set $\B'\subseteq\B$ of blocks is \emph{disjoint} if $B_1\cap B_2=\emptyset$ for any $B_1,B_2\in\B'$. In the literature, such a $\B'$ is also referred to as a \emph{matching}. A block $B\in \B$ is \emph{isolated} if $B$ is disjoint from all other blocks in $\B$. (Of course, each vertex of an isolated block has degree $1$.) If $d_0=0$ and each vertex of degree $1$ is contained in an isolated block, then we say $H$ is \emph{segregated}.

We will show later that $H$ may be assumed segregated under the hypotheses of our main theorem. In that case, the interesting analysis reduces to the full subhypergraph induced by the vertices of degree at least $2$. With that in mind, we give a further couple of definitions which will prove central to our argument. Let $I$ be an independent set. Then a \emph{toe}, or more explicitly \emph{$I$-toe} is a vertex $z$ of degree at least $2$ appearing in just one block of $\B_I$. For $x\in I$, we say $z$ is an $x$-toe if it is adjacent to $x$; the set of all $x$-toes is denoted $F_x$, and the union $F_I:=\bigcup_{x\in I} F_x$ is the \emph{foot} or \emph{$I$-foot}. (Note that an $\{x\}$-toe may fail to be an $I$-toe for $\{x\}\subsetneq I$.) Suppose $I=\{x_1,\dots,x_\ell\}$, with $\delta(I)=(d(x_1),\dots,d(x_\ell))$, where $d(x_1)>1$. Then set 
\[\tau(I)=\left(\left|F_I\cap \bigcup\B_{x_1}\right|,\dots,\left|F_I\cap\bigcup\B_{x_\ell}\right|\right),\]
which denotes the distribution of toes among the $x$-blocks as $x$ ranges over the vertices in $I$. 

\section{Preliminaries}
We use this section to collect a miscellany of results on hypergraphs and lottery designs of varying generality that we use in the proof of the main theorem. Our strategy is heavily influenced by the paper \cite{BateRees} and we use most of its results in one form or another. However, we noticed a number of infelicities among the statements and proofs in \cite{BateRees} and so we take the opportunity here to make some corrections. Happily, it follows this paper is largely self-contained. 

\subsection{Upper bounds}\label{sec:uppbound} There are several websites that collect information about the values of $L(n,k,p,t)$\footnote{\url{https://lottodesigns.altervista.org/} is one of the most up to date}; typically one finds there is much better information known about upper bounds than lower bounds, though often little about how these were arrived at. Mostly they can be generated from covering designs.

An \emph{$(n,k,t)$-covering design} is a $k$-uniform hypergraph $H=(X,\B)$ such that every subset of $X$ of size $t$ appears as a subset in at least one block of $\B$; we assume $n\geq k\geq t$. Define $C(n,k,t)$ as the minimal size of an $(n,k,t)$-covering design. Obviously, an $(n,k,t)$-covering design is equivalently an $(n,k,t,t)$-lottery design, so $L(n,k,p,t)\leq C(n,k,t)$. 
\begin{lemma}\label{upperbound} We have 
    \[L(n, k, p, 2) \leq \min_{\sum_{i=1}^{p-1} a_{i} = n}\left(  \sum_{i=1}^{p-1} C( a_{i}, k, 2) \right).\]
\end{lemma}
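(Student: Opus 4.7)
The plan is to construct explicit $(n,k,p,2;j)$-lottery designs whose sizes realise the right-hand side, so that the minimum of these sizes is an upper bound for $L(n,k,p,2)$.

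First I would fix any composition $n = a_1 + a_2 + \dots + a_{p-1}$ into positive parts, and partition the vertex set $X$ (with $|X| = n$) into pairwise disjoint subsets $X_1, \dots, X_{p-1}$ with $|X_i| = a_i$. For each $i$, take an $(a_i,k,2)$-covering design $H_i = (X_i, \B_i)$ of minimal size $C(a_i,k,2)$ (which exists provided $a_i \geq k$; parts smaller than $k$ can be merged into a larger part, noting that the bound in the statement is trivial otherwise). Then form the hypergraph $H = (X, \B)$ with $\B = \B_1 \sqcup \dots \sqcup \B_{p-1}$. By construction $H$ is $k$-uniform of order $n$ and has exactly $\sum_{i=1}^{p-1} C(a_i,k,2)$ blocks.

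The key step is verifying that $H$ is indeed an $(n,k,p,2;j)$-lottery design. Let $D \subseteq X$ be any subset with $|D| = p$. Since $D$ is distributed among the $p-1$ classes $X_1, \dots, X_{p-1}$, pigeonhole forces some $X_i$ to contain at least two elements of $D$; pick any such pair $\{x,y\} \subseteq D \cap X_i$. By the defining property of the covering design $H_i$, there exists $B \in \B_i \subseteq \B$ with $\{x,y\} \subseteq B$, so $|B \cap D| \geq 2$, as required.

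Having established this for a single composition, the final step is to optimise: since each such construction produces a valid lottery design, $L(n,k,p,2)$ is at most the size of the cheapest one, giving
\[L(n,k,p,2) \leq \min_{\sum_{i=1}^{p-1} a_i = n}\left(\sum_{i=1}^{p-1} C(a_i,k,2)\right).\]
There is no real obstacle here; the only mild subtlety is handling parts with $a_i < k$ (for which $C(a_i,k,2)$ is not defined in the usual sense), which one either excludes from the minimum or formally sets to $+\infty$ so that such compositions do not affect the bound.
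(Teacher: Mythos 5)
Your proof is correct and is essentially identical to the paper's: partition the vertex set according to the composition, take a minimal $(a_i,k,2)$-covering design on each part, and apply pigeonhole to find two drawn vertices in a common part, hence covered by a common block. Your extra remark about parts with $a_i<k$ is a reasonable tidying-up that the paper leaves implicit.
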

\begin{proof} Let $H_i=(X_i,\B_i)$ be $(a_i,k,2)$-covering designs of size $C(a_i,k,2)$ such that $X=\bigsqcup_{i=1}^{p-1}X_i$, and let $D$ be a draw of $p$ vertices. Then at least two vertices of $D$ lie in at least one $X_i$, and there is a block $B$ of $\B_i$ containing those two vertices.\end{proof} 

\cref{upperbound} can be deployed using the entries from \cref{coveringnumbers}. The table lists upper bounds for $C(n,6,2)$ for $n\geq 6$ which were harvested from \url{https://www.dmgordon.org/cover/}; these upper bounds are all known to be sharp except when $n=23$ or $24$.
\begin{table}
\begin{tabular}{|c|c||c|c|}\hline
    $n$ & $C(n,6,2)\leq$ & $n$ & $C(n,6,2)\leq$\\\hline
    $6$ & $1$ & $17$ & $12$\\
    $7$ & $3$ & $18$ & $12$\\
    $8$ & $3$ & $19$ & $15$\\
    $9$ & $3$ & $20$ & $16$\\
    $10$ & $4$ & $21$ & $17$\\
    $11$ & $6$ & $22$ & $19$\\
    $12$ & $6$ & $23$ & $21$\\
    $13$ & $7$ & $24$ & $22$\\
    $14$ & $7$ & $25$ & $23$\\
    $15$ & $10$ & $26$ & $24$\\
    $16$ & $10$ & $27$ & $27$\\
    \hline
\end{tabular}\caption{Upper bounds for $C(n,6,2)$}\label{coveringnumbers}
\end{table}

Secondly, it is useful to know that lottery numbers increase mononotonically with $n$.

\begin{lemma}\label{doesntgodown}
    We have $L(n,k,p,t)\leq L(n+1,k,p,t)$.
\end{lemma}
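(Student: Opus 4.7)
The plan is to start from an optimal design on $n+1$ vertices and construct a design on $n$ vertices with no more blocks, by deleting a vertex and repairing the resulting size-$(k-1)$ blocks.

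Concretely, let $H=(X,\B)$ be an $(n+1,k,p,t;L(n+1,k,p,t))$-lottery design and pick any $v\in X$. Form $\B'$ by keeping every block $B\in\B$ with $v\notin B$ as it stands, and, for every block $B\in\B$ with $v\in B$, replacing $v$ by some choice of vertex $u_B\in X\setminus(B\cup\{v\})$. Such a $u_B$ exists because $|X\setminus(B\cup\{v\})|=(n+1)-k\geq 1$ by the standing assumption $n\geq k$. Set $X'=X\setminus\{v\}$ and $H'=(X',\B')$; then $H'$ is $k$-uniform of order $n$ with $|\B'|\leq|\B|$ (equality may fail if two modified blocks collide).

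It remains to verify that $H'$ is an $(n,k,p,t)$-lottery design. Take any draw $D\subseteq X'$ with $|D|=p$. Since $D\subseteq X$, the original design supplies a block $B\in\B$ with $|B\cap D|\geq t$. Because $v\notin D$, deleting $v$ from $B$ does not shrink $B\cap D$, and adding back any single vertex $u_B\in X\setminus(B\cup\{v\})$ can only enlarge it. Hence the corresponding block $B'\in\B'$ (either $B$ itself, or the modified version $(B\setminus\{v\})\cup\{u_B\}$) satisfies $|B'\cap D|\geq|B\cap D|\geq t$, as required.

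Finally, minimality of $L(n,k,p,t)$ gives $L(n,k,p,t)\leq|\B'|\leq|\B|=L(n+1,k,p,t)$. There is no real obstacle here; the only subtlety is to check that the repair vertex $u_B$ exists, which is where the hypothesis $n\geq k$ from \cref{indsetrems} (and the standing conventions) is used.
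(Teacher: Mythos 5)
Your proof is correct and follows essentially the same route as the paper: delete a vertex $v$ from an optimal $(n+1)$-vertex design and repair each block containing $v$ by substituting a vertex outside that block, then observe that any draw avoiding $v$ is still matched. The only difference is that you spell out the verification and the existence of the repair vertex, which the paper leaves as ``clear''.
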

\begin{proof}
    Take an $(n+1,k,p,t;j)$-lottery design $H=(X,\B)$ with $|\B|=j$. Pick $x\in X$ and construct $j$ subsets $\C$ of $X\setminus\{x\}$ from those of $\B$ by replacing $x\in B$ with any other vertex of $X\setminus B$, where necessary. Then it is clear that the hypergraph $H_0=(X\setminus \{x\},\C)$ is an $(n,k,p,t;j)$-lottery design.
\end{proof}

\subsection{Reductions and constraints}

\begin{lemma}\label{BR1}
    Let $H=(X,\B)$ be an $(n, k, p, t; j)$-lottery design  with $j \geq n/k$. Then there exists an $(n, k, p, t; j)$-lottery design $H_0=(X,\B_0)$ with $\bigcup\B_0=X$; i.e.~there are no elements of degree $0$.
        \end{lemma}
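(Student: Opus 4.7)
My plan is to induct on $d_0$, the number of vertices of degree zero; the base case $d_0 = 0$ is trivial, so I focus on producing, from a lottery design with $d_0 \geq 1$, another lottery design on the same $X$ and of the same size $j$ but with strictly smaller $d_0$.

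Pick a vertex $x$ with $d(x) = 0$. The degree-sum identity gives $\sum_{v \in X} d(v) = jk \geq n$, while only $n - d_0 < n$ vertices have positive degree, so the average degree among positive-degree vertices exceeds $1$, and hence some vertex $y$ has $d(y) \geq 2$. Choose any block $B_1 \in \B$ containing $y$, and form $B_1' := (B_1 \setminus \{y\}) \cup \{x\}$. Put $\B' := (\B \setminus \{B_1\}) \cup \{B_1'\}$. Since $x$ is absent from every block of $\B$ but $x \in B_1'$, we have $B_1' \notin \B$, so $|\B'| = j$; and $|B_1'| = k$. After verifying $H_0 = (X, \B')$ is a lottery design, $d_0$ has decreased (because $d(x)$ is now $1$ and $d(y) \geq 1$ remains), and the inductive hypothesis finishes the job.

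The heart of the argument is the verification that $H_0$ is still an $(n,k,p,t;j)$-lottery design. I would argue by contradiction: if some draw $D$ has no witness in $\B'$, then since $\B$ did have a witness $B^*$ and every block of $\B \setminus \{B_1\}$ survives in $\B'$, we must have $B^* = B_1$, and $B_1$ is actually the \emph{unique} witness for $D$ in $\B$. Comparing $|B_1' \cap D| = |B_1 \cap D| + [x \in D] - [y \in D]$ with the assumption $|B_1' \cap D| < t \leq |B_1 \cap D|$ forces $y \in D$, $x \notin D$, and $|B_1 \cap D| = t$ exactly.

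The key trick, and the step I expect to be the main conceptual obstacle, is the \emph{draw swap}: consider the altered draw $D' := (D \setminus \{y\}) \cup \{x\}$, which is still a $p$-subset of $X$. The lottery property for $D'$ in $\B$ gives a witness $B^{**}$ with $|B^{**} \cap D'| \geq t$. Because $x$ lies in no block of $\B$, we compute $|B^{**} \cap D| - |B^{**} \cap D'| = [y \in B^{**}] \geq 0$, so $B^{**}$ is also a witness for $D$ in $\B$; by uniqueness $B^{**} = B_1$. But then $|B_1 \cap D'| = |B_1 \cap D| - 1 = t - 1 < t$, contradicting that $B_1$ witnesses $D'$. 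This contradiction completes the claim, and the induction closes the proof.
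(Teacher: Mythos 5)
Your proof is correct and follows essentially the same route as the paper: replace $y$ by the degree-$0$ vertex $x$ in one block, then use the swapped draw $(D\setminus\{y\})\cup\{x\}$ to contradict the lottery property of the original design. Your write-up in fact fills in the details (uniqueness of the witness $B_1$, the indicator computation using $x\notin B^{**}$) that the paper's terser final sentence leaves implicit.
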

        \begin{proof}Let $H$ be a counterexample with $d_0>0$ minimal. Suppose $x\in X$ has degree $0$. Since $jk\geq n$, there must be $y\in X$ with $d(y)\geq 2$. Suppose $y\in B$ for some block $B$ of $\B$ and set $B_0=(B\setminus\{y\})\cup \{x\}$. Now set $H_0=(X,\B_0)$, where $\B_0$ is $\B$ with the block $B$ replaced by $B_0$. Then $H_0$ is a $k$-uniform hypergraph of order $n$ and size $j$. If $D$ is any subset of $X$ of order $p$, then either there is $C\in\B_0$ with $|C\cap D|\geq t$ or we may assume $|B\cap D|=t$, $y\in D$ and $|B_0\cap D|=t-1$. This implies also $x\not \in D$; but then the alternative draw $(D\setminus\{y\})\cup\{x\}$ cannot intersect a block of $\B$ in $t$ elements.\end{proof}
\begin{lemma}\label{segregated}
If $H=(X,\B)$ is an $(n, k, p, t; j)$-lottery design with $n \geq k(p-1)$ and $d_0=0$, then there exists a segregated $(n, k, p, t; j)$-lottery design.
\end{lemma}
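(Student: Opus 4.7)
I would argue by a maximality principle. Among all $(n,k,p,t;j)$-lottery designs $H=(X,\B)$ on the vertex set $X$ satisfying $d_0=0$, choose one that maximises the number of isolated blocks, and claim that any such $H$ is segregated. If not, there is a degree-$1$ vertex $x$ in a non-isolated block $B$, and $B$ must then contain a vertex $y$ with $d(y)\geq 2$; consequently $y$ sits in at least one further block $C\neq B$. The aim is to produce a new design $H'$ with strictly more isolated blocks, contradicting the extremal choice of $H$.

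The modification is a swap in the spirit of \cref{BR1}: replace $B$ by $B'=(B\setminus\{y\})\cup\{z\}$ for a carefully chosen vertex $z\in X\setminus B$, possibly combined with compensating changes to $C$ (or to one further block) so that $z$ ends up with degree $1$ in an isolated block and $B'$ is isolated too. To preserve the lottery property, I would argue exactly as in the proof of \cref{BR1}: if some draw $D$ becomes uncovered after the swap, then it must be that $|B\cap D|=t$, $y\in D$ and no other block of $\B$ hits $D$ in $t$ places; the alternative draw $D'=(D\setminus\{y\})\cup\{z\}$ would then already have been uncovered in $H$, which is a contradiction. This forces $z$ to be chosen so that it lies in no block that originally intersected $D$ in $\geq t$ vertices.

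The role of the hypothesis $n\geq k(p-1)$ is precisely to guarantee that such a $z$ always exists: the vertex set is large enough, compared with the few ``problem blocks'' around $B$ and $C$, that we can always find a degree-$1$ vertex disjoint from all blocks hitting $D$ in $t$ vertices. Each successful swap strictly increases the number of isolated blocks, which is bounded above by $j$, so iterating the procedure terminates at a segregated lottery design with the same parameters.

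The main obstacle is exactly this case analysis around the local structure of $B$ and $C$: one must show that across every configuration of degrees in $B$ and its neighbourhood, a suitable $z$ (and, where needed, a short cascade of swaps) can be found so that the resulting block $B'$ is genuinely isolated and the lottery property survives. Everything else---the extremal set-up, the termination, and the \cref{BR1}-style covering argument---is then routine.
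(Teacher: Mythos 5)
Your extremal framing (maximise the number of isolated blocks, then derive a contradiction) and the termination argument are fine, but the heart of the proof---the actual swap and the verification that it preserves the lottery property---is precisely what you defer as ``the main obstacle'', and the swap you do sketch points in the wrong direction. You propose to replace $B$ by $B'=(B\setminus\{y\})\cup\{z\}$ and arrange for $B'$ to be isolated; but $B$ may contain several vertices of degree $>1$ besides $y$, and the inserted vertex $z$ already has degree $\geq 1$ (since $d_0=0$), so it acquires degree $\geq 2$ the moment it joins $B'$; hence $B'$ cannot be isolated after one swap, and the ``short cascade'' needed to repair this is exactly the unproved content. The paper's modification is the reverse: it keeps $B=B_1$ intact and strips $y$ out of its \emph{other} blocks $B_2,\dots,B_\ell$, replacing $y$ there by non-isolated vertices $z_i\notin B_i$, which lowers $d(y)$ to $1$; iterating over the vertices of $B$ eventually isolates $B$. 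That orientation is what makes the \cref{BR1}-style verification go through: an uncovered draw $D$ must contain $y$ with $|D\cap B_i|=t$ for some $i\geq 2$; one may assume $x\notin D$ because $x,y\in B_1$ and $B_1$ is unchanged; and then $(D\setminus\{y\})\cup\{x\}$ is a draw already uncovered in $H$, since $x$ has degree $1$ and lies only in $B_1$. Your swap destroys this structure because $B$ itself changes, so the ``alternative draw'' argument you invoke does not transfer.

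Second, your assertion that $n\geq k(p-1)$ ``guarantees that such a $z$ always exists'' is not proved and is not how the hypothesis actually enters. The paper first bounds the number $r$ of isolated blocks by $p-1$ (one vertex from each forms an independent set), disposes of $r=p-1$ (all $k(p-1)$ vertices then lie in isolated blocks), and handles $r=p-2$ by a separate argument showing the design is already segregated there: a draw through the isolated blocks plus two non-isolated vertices forces the non-isolated blocks to cover every pair of non-isolated vertices, whence each such vertex has degree $\geq 2$. Only in the remaining case $r\leq p-3$ does $n\geq k(p-1)$ supply the $\geq 2k$ non-isolated vertices needed to choose each replacement $z_i$ outside $B_i$. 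Without this case analysis your supply of replacement vertices is unjustified: when $r=p-2$ there may be as few as $k$ non-isolated vertices, too few to guarantee a choice avoiding a given block.
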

\begin{proof}
%We need to show exhibit a lottery design with $d_1=rk$. 
Suppose there are $r$ isolated blocks in $H$. Then taking one vertex from each yields an independent set $I$, so $r\leq p-1$ by \cref{indsetrems}(i). If $r=p-1$ then the isolated blocks supply all $kr=k(p-1)$ vertices of $X$ and the statement holds. 

Suppose $r=p-2$; then there are $n-k(p-2)$ elements not in isolated blocks. A draw of order $p$ containing one vertex from each isolated block together with two non-isolated vertices can only intersect a non-isolated block in at least $t$ places. Thus the non-isolated blocks must between them contain every pair of the non-isolated vertices. This means that each appears at least twice, or $n-k(p-2)=2$ and they both appear exactly once. But the latter says they are themselves in an isolated block, a contradiction.

Hence we may assume $r\leq p-3$, leaving at least $2k$ elements not in isolated blocks. Let $B$ be a non-isolated block and assume that there are $x,y\in B$ with $d(x)=1$ and $d(y)>1$. We modify $H$ to give a lottery design $H_0$ with $d(y)=1$. By an evident induction, this implies the existence of the required design.

Let $B=B_1,\dots,B_\ell$ be the blocks containing $y$. For $2\leq i\leq \ell$, find a non-isolated element $z_i$ such that $z_i\not\in B_i$; this is possible since there are at least $2k$ non-isolated elements. Form $C_i$ by replacing $y$ with $z_i$ and let $C_1=B_1$. Then we claim we get a new $(n,k,p,t;j)$-lottery design $H_0=(X,\C)$ with $d_0=0$ by letting $\C=(\B\setminus\{B_1,\dots,B_\ell\})\cup\{C_1,\dots,C_\ell\}$.

To prove the claim, take a draw $D$ and assume $D$ does not intersect any block $C\in\C$ in at least $t$ elements. Then we may assume $D$ contains $y$, $|D\cap B_i|=t$ for some $2\leq i \leq l$ and $|D\cap C_i|=t-1$.  Furthermore, since $d(x)=1$, if $x\in D$, then $D\cap B_1=D\cap C_1$ has at least $t$ elements, so we may assume $x\not\in D$. Then replacing $y$ with $x$ in $D$ gives a draw $D_0$ which intersects no block of $\B$ in at least $t$ elements.
\end{proof}

The above results are essentially the same as \cite[Lem.~3.2, Thm.~3.5]{BateRees}. In between is \cite[Lem.~3.4]{BateRees} which shows (correctly) that given an $(n, k, p, 2)$-lottery design with $n>k(p-2)$, then there is another with a maximal independent set of size $p-1$. It is combined with the above two results to claim the existence of a lottery design satisfying the conclusions of all three results. Unfortunately the methods of proof go by altering the degrees of vertices in the design, and it is unclear whether this can be done compatibly. There is a further issue in the proof of \cite[Prop.~4.6]{BateRees} where it incorrectly assumed that an independent set $I$ with $d_2(I)$ maximal can be extended to an independent set of maximal cardinality.

We resolve these issue in \cref{1s2s3s} below, with a stronger result. As in \cite{BateRees}, we make use of Shannon's bound \cite{Sha49} on the chromatic number of a graph, though we get better results by using a sharpness result due to Vizing. (Both Shannon's and Vizing's theorems are given a good exposition in \cite{scheide}.)

Let $G=(V,E)$ be a finite undirected graph possibly with multiple edges between distinct vertices but with no loops. A \emph{$c$-edge colouring} of $G$ is an assignment of a colour to each edge in $G$ such that no two adjacent edges have the same colour and at most $c$ different colours are used. The \emph{chromatic index} $\chi'(G)$ of $G$ is the smallest integer $c$ such that $G$ admits a $c$-edge colouring. From the definition it follows immediately that the maximum degree $\Delta(G)$ is a lower bound for the chromatic index. In fact,
\begin{thm}[Shannon's bound]\label{shannonbound} We have $\chi'(G)\leq \left\lfloor \frac{3}{2}\Delta(G)\right\rfloor$.\end{thm}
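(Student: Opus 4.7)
The plan is induction on $|E(G)|$, with base case $|E(G)|=0$ trivial. For the inductive step, I remove an arbitrary edge $e=uv$ from $G$; by the inductive hypothesis $G-e$ admits a proper edge-colouring with at most $c:=\lfloor 3\Delta(G)/2\rfloor$ colours, which I attempt to extend to $e$. Writing $M(x)$ for the set of colours not appearing on any edge incident to $x$ in this colouring, the degrees of $u$ and $v$ in $G-e$ are at most $\Delta-1$, so $|M(u)|,|M(v)|\geq c-\Delta+1$. If $M(u)\cap M(v)\neq \emptyset$, I colour $e$ with a common missing colour and close the induction.

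Otherwise, I attempt a standard Kempe chain swap. For $\alpha\in M(u)$ and $\beta\in M(v)$, let $P(\alpha,\beta)$ be the unique maximal walk from $v$ whose edges alternate in colours $\beta,\alpha,\beta,\ldots$; this is well defined because $\alpha\notin M(v)$ and at every intermediate vertex at most one of $\alpha,\beta$ is missing, so the walk continues uniquely until it reaches a vertex missing the next required colour. Interchanging $\alpha$ and $\beta$ along $P(\alpha,\beta)$ preserves properness of the colouring, and if $P(\alpha,\beta)$ does not terminate at $u$ then after the swap $\alpha$ is missing at both $u$ and $v$, so I colour $e$ with $\alpha$.

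The problem therefore reduces to the hypothesis that $P(\alpha,\beta)$ terminates at $u$ for every admissible pair $(\alpha,\beta)\in M(u)\times M(v)$. The main obstacle is extracting a numerical contradiction from this structural condition. A coarse pigeonhole using $|M(u)|+|M(v)|\geq 2(c-\Delta+1)$ against $|M(u)\cup M(v)|\leq c$ yields only $c\leq 2\Delta-2$, which contradicts $c=\lfloor 3\Delta/2\rfloor$ when $\Delta=2$ but not for larger $\Delta$. To handle the remaining cases I would chain together several Kempe swaps and track how many edges of each colour are incident to $u$ and $v$: each constrained pair $(\alpha,\beta)$ forces at least two specific edges around $\{u,v\}$, and summing over all such pairs gives a strict lower bound on the number of distinct colours used by the edges incident to $\{u,v\}$, forcing $c\geq\lfloor 3\Delta/2\rfloor+1$. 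This refined pigeonhole---calibrated so that the precise constant $3/2$ (rather than the weaker $2$) emerges---is the heart of the argument; everything else is routine.
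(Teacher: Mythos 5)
First, note that the paper does not prove this statement at all: Shannon's bound is quoted as a classical theorem (Shannon 1949, with the exposition in the cited book of Stiebitz et al.) and used as a black box, so there is no in-paper proof to compare against. Judged on its own terms, your attempt has a genuine gap. The induction, the case $M(u)\cap M(v)\neq\emptyset$, and the Kempe-chain swap are all standard and correct (modulo a slip: since $\beta\in M(v)$ there is no $\beta$-edge at $v$, so the chain from $v$ must begin with the $\alpha$-edge, i.e.\ alternate $\alpha,\beta,\alpha,\dots$, which is what your justification ``$\alpha\notin M(v)$'' actually supports). But everything up to that point is the easy part, and it reduces the theorem to exactly the statement you then wave at: derive a contradiction from ``every $(\alpha,\beta)$-chain from $v$ ends at $u$'' when $c=\lfloor 3\Delta/2\rfloor$. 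You explicitly defer this (``the heart of the argument'') to an unspecified ``refined pigeonhole'' counting colours on edges incident to $\{u,v\}$, and that sketch does not work as described: a count localised at the two vertices $u,v$ can only see $|M(u)|+|M(v)|\le c$ and $|M(u)|,|M(v)|\ge c-\Delta+1$, which is the coarse bound $c\le 2\Delta-2$ you already conceded is insufficient. The Kempe-chain hypothesis adds no new edges \emph{at} $u$ or $v$ beyond those already guaranteed (an $\alpha$-edge at $v$ and a $\beta$-edge at $u$); the rest of each chain lives elsewhere.

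The missing idea is that the constant $3/2$ comes from a \emph{three}-vertex configuration --- the extremal examples are the fat triangles (the ``Shannon graphs'' of \cref{Vizing}) --- and every standard proof introduces a third vertex. Concretely: let $vw$ be the $\alpha$-coloured first edge of the chain at $v$; one shifts the uncoloured edge to $w$ by a fan/recolouring step (uncolour $vw$, colour $uv$, etc.) and shows that $M(u)$, $M(v)$, $M(w)$ are forced to be essentially pairwise disjoint, whence roughly $3(c-\Delta)\lesssim c$, i.e.\ $c\lesssim \tfrac{3}{2}\Delta$; making the additive constants precise is where the real work lies, and it is careful bookkeeping at three vertices, not two. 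Until you supply that step (or an equivalent one), the proof is incomplete at exactly the point where Shannon's theorem differs from the trivial bound $\chi'(G)\le 2\Delta-1$.
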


For $x,y\in V$, let $\mu_{G}(x,y)$ denote the number of edges between $x$ and $y$. Then a \emph{Shannon graph} of degree $d\geq 2$ is a graph $G$ consisting of three vertices $x,y,z$ such that $\mu_{G}(x,y)=\mu_{G}(y,z)=\left\lfloor\frac{d}{2}\right\rfloor$ and $\mu_{G}(x,z)= \left\lfloor\frac{d+1}{2}\right\rfloor$. For a Shannon graph of degree $d$ we have $\Delta(G)=d$ and $\chi'(H)=\left\lfloor\frac{3}{2}d\right\rfloor$, so 
that the bound in \cref{shannonbound} is sharp. More generally:

\begin{thm}[Vizing]\label{Vizing} Suppose $\chi'(G)=\left\lfloor \frac{3}{2}\Delta(G)\right\rfloor$ where $\Delta=\Delta(G)\geq 4$. Then $G$ contains a Shannon graph of degree $\Delta$ as a subgraph.\end{thm}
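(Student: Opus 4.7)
The plan is to derive the Shannon subgraph from a minimum counterexample. Assume for contradiction that $G$ is a graph with $\chi'(G)=k:=\lfloor \frac{3}{2}\Delta\rfloor$, $\Delta=\Delta(G)\geq 4$, and no Shannon graph of degree $\Delta$ as a subgraph; choose such a $G$ with the smallest number of edges. A direct argument shows $G$ is edge-critical: if an edge $e$ could be removed without dropping the chromatic index, then either $\Delta(G-e)=\Delta$, so $G-e$ is a smaller counterexample contradicting minimality; or $\Delta(G-e)<\Delta$, in which case any $(k-1)$-edge-colouring of $G-e$ extends to $G$, contradicting $\chi'(G)=k$.

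Next, I would fix an edge $xy$ with $x$ of maximum degree and analyse a $(k-1)$-edge-colouring $\phi$ of $G-xy$; let $M(v)\subseteq\{1,\dots,k-1\}$ denote the set of colours missing at $v\in\{x,y\}$. Since $\phi$ cannot be extended over $xy$, we have $M(x)\cap M(y)=\emptyset$. Basic counting gives $|M(v)|\geq (k-1)-(d(v)-1)\geq \lfloor\Delta/2\rfloor$ for $v\in\{x,y\}$. A Kempe-swap along any $\{\alpha,\beta\}$-alternating path starting at $x$ with $\alpha\in M(x)$ and $\beta\in M(y)$ must terminate at $y$; otherwise swapping $\alpha$ and $\beta$ on that component would give a new colouring with $\alpha\in M(x)\cap M(y)$, contradicting inextendibility.

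The heart of the argument is then a fan or Tashkinov-tree analysis adapted to multigraphs along the lines of Scheide's exposition: one traces Kempe chains from $x$ through a sequence of vertices, each reached by an edge whose colour is missing at the previously reached vertex, and exploits inextendibility to force rigid constraints on the multiplicities of edges among $x$, $y$ and the traced vertices. When no Shannon triangle is present, enough colours remain interchangeable along these chains to produce a genuine $(k-1)$-edge-colouring of $G$, contradicting $\chi'(G)=k$. The only configuration resisting all such recolourings is three mutually adjacent vertices carrying the pairwise multiplicities $\lfloor\Delta/2\rfloor,\ \lfloor\Delta/2\rfloor,\ \lfloor(\Delta+1)/2\rfloor$---precisely a Shannon graph of degree $\Delta$.

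The main obstacle is this final step. In multigraphs the Kempe-chain and fan machinery is considerably more delicate than in the simple-graph proof of Vizing's $\Delta+1$ theorem, since parallel edges between the same pair of vertices carry distinct colours and so allow chains to revisit a vertex through different edges. Taming this bookkeeping---essentially the content of Vizing's 1965 argument, modernised by Scheide---is where the real work sits. The hypothesis $\Delta\geq 4$ is used to guarantee $|M(v)|\geq\lfloor\Delta/2\rfloor\geq 2$, which is what supplies enough freedom to perform the successive swaps; without it one has too few free colours at the endpoints to drive the reduction to the Shannon configuration.
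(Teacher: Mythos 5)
This theorem is not proved in the paper at all: it is quoted as a known result of Vizing, with the reader referred to Scheide's exposition for a proof. So there is no in-paper argument to compare against, and the question is whether your sketch stands on its own. It does not. The opening moves are fine --- the reduction to an edge-critical counterexample, the count $|M(v)|\geq (k-1)-(d(v)-1)\geq\lfloor\Delta/2\rfloor$ of missing colours, and the observation that every $\{\alpha,\beta\}$-Kempe chain with $\alpha\in M(x)$, $\beta\in M(y)$ must connect $x$ to $y$ are all standard and correct. But the entire content of the theorem is concentrated in the paragraph you label ``the heart of the argument,'' and there you simply assert the conclusion: that when no Shannon triangle is present the recolourings can always be completed, and that the unique irreducible configuration has pairwise multiplicities $\lfloor\Delta/2\rfloor,\lfloor\Delta/2\rfloor,\lfloor(\Delta+1)/2\rfloor$ on three mutually adjacent vertices. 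No fan is actually constructed, no invariant of the traced vertices is established, and no case analysis is carried out. Saying that the bookkeeping is ``essentially the content of Vizing's 1965 argument'' is an acknowledgement that the proof has not been given, not a substitute for giving it.

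A secondary point: your explanation of the hypothesis $\Delta\geq 4$ is too thin to be convincing. The inequality $\lfloor\Delta/2\rfloor\geq 2$ is a necessary ingredient, but the theorem genuinely fails for $\Delta=3$ --- the Petersen graph is simple, has $\Delta=3$ and $\chi'=4=\lfloor 9/2\rfloor$, yet contains no multiple edges at all, let alone a Shannon triangle. Any correct proof must therefore use $\Delta\geq 4$ at a specific step that breaks for the Petersen graph, and your sketch never identifies such a step. As it stands the proposal is a plausible roadmap for where a proof would live, but the decisive combinatorial work is missing.
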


For our situation we need a dual version for hypergraphs. Let $H=(X,\B)$ be a hypergraph with blocks of order at most $k$ and let $H_0=(X_0,\B_0)$ of $H$ be a subhypergraph. We say $H_0$ is \emph{$k$-Shannon} if $\B_0=\{B_1,B_2,B_3\}$, $X_0=B_1\cup B_2\cup B_3$, there is at most $1$ vertex in $X_0$ of degree not $2$, and where $|B_1\cap B_2| = |B_2\cap B_3|=\left\lfloor\frac{k}{2}\right\rfloor$ and $|B_1\cap B_3|=\left\lfloor\frac{k+1}{2}\right\rfloor$. If $k=2m$ then this means there are $3m$ vertices in $X_0$ and $H=H_0\sqcup H_1$ is a disjoint union of $H_0$ with another subhypergraph $H_1$. If $k=2m+1$ then there are $3m+1$ vertices of degree $2$ and at most one further vertex $v$, where only $v$ may appear in blocks outside of $\B_0$.

\begin{prop} \label{ninthdeg2}
    Let $H=(X,\B)$ be a hypergraph whose blocks $B\in \B$ are of order at most $k\geq 3$ and suppose $H$ contains precisely $s$ distinct $k$-Shannon subhypergraphs. Then there exists an independent set $I$ containing 
    \[s+\left\lceil\frac{d_2-\left\lfloor\frac{3k+1}{2}\right\rfloor s}{\left\lfloor\frac{3k}{2}\right\rfloor-1} \right\rceil\]
    vertices of degree $2$.
\end{prop}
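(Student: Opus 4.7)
The plan is to dualise the problem. Let $G=(\B,E)$ be the multigraph whose vertices are the blocks of $H$ and whose edges are the degree-$2$ vertices of $H$: each $v \in X$ with $d(v)=2$ contributes one edge $\{B,B'\}$ joining the two blocks that contain it. Then $|E(G)|=d_2$, the maximum degree satisfies $\Delta(G)\le k$ (each block contributes at most $k$ vertices and hence at most $k$ degree-$2$ ones), and crucially, matchings in $G$ correspond bijectively to independent sets of degree-$2$ vertices of $H$. Under this dualisation, a $k$-Shannon subhypergraph is precisely a Shannon subgraph of $G$ of degree $k$: the three blocks $B_1,B_2,B_3$ become the three vertices and the pairwise intersections provide the prescribed multiplicities $\lfloor k/2\rfloor,\lfloor k/2\rfloor,\lfloor(k+1)/2\rfloor$.

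Two structural observations about such a Shannon subgraph $S$ drive the argument. First, since $|B_1|=|B_3|=k$ with all vertices of these outer blocks consumed by intersections inside $S$, each of $B_1,B_3$ has $G$-degree exactly $k$ with all its incident edges already inside $S$; the middle block $B_2$ can carry at most one additional $G$-edge escaping $S$, corresponding (only in the odd-$k$ case with $|B_2|=k$) to the ``floating'' vertex when it happens to have degree $2$ in $H$. Hence the total number of degree-$2$ vertices of $H$ touched by $S$ is at most $\lfloor(3k+1)/2\rfloor$. Second, the ``disjoint union'' clauses in the definition of $k$-Shannon force no two distinct Shannon subhypergraphs to share any of their three blocks, so the $s$ Shannon subgraphs $S_1,\dots,S_s$ in $G$ are pairwise vertex-disjoint.

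From here the construction is direct. For each $S_i$ pick a vertex $v_i\in B_{i,1}\cap B_{i,3}$, which exists as $|B_{i,1}\cap B_{i,3}|=\lfloor(k+1)/2\rfloor\ge 1$; in $G$ the edge $v_i$ joins $B_{i,1}$ and $B_{i,3}$. Set $G':=G-\bigcup_{i=1}^{s}V(S_i)$, so $|E(G')|\ge d_2-s\lfloor(3k+1)/2\rfloor$. Any Shannon subgraph of $G'$ would also be a Shannon subgraph of $G$ on three vertices outside $\bigcup_i V(S_i)$, contradicting the hypothesis that $G$ contains exactly the $s$ Shannon subgraphs $S_1,\dots,S_s$. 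Hence \cref{Vizing} (combined with \cref{shannonbound} in the case $\Delta(G')<k$) gives $\chi'(G')\le\lfloor 3k/2\rfloor-1$. Pigeonhole then yields a matching $M$ in $G'$ of size at least $\lceil(d_2-s\lfloor(3k+1)/2\rfloor)/(\lfloor 3k/2\rfloor-1)\rceil$. Since each $v_i$ has both endpoints in $V(S_i)$, and the $V(S_i)$ are pairwise disjoint and disjoint from $V(G')$, the union $\{v_1,\dots,v_s\}\cup M$ is a matching in $G$ of the claimed size, providing the required independent set $I$.

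The delicate step will be the Shannon bookkeeping: justifying carefully (from the odd-$k$ clause of the definition) that distinct Shannon subhypergraphs really are block-disjoint in $G$, and observing that a ``floating'' external edge potentially shared between two middle blocks $B_2^{(i)},B_2^{(j)}$ vanishes on the first of the two removals, so the tally $s\lfloor(3k+1)/2\rfloor$ can only overcount the edges actually removed. A minor technical point is that our statement of \cref{Vizing} requires $\Delta\ge 4$, so for $k=3$ one must invoke the general form of Vizing's theorem.
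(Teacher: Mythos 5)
Your proposal is correct and follows essentially the same route as the paper: form the dual multigraph on the blocks with degree-$2$ vertices as edges, excise the $s$ Shannon configurations (accounting for at most $\lfloor(3k+1)/2\rfloor$ edges each), apply \cref{shannonbound} together with \cref{Vizing} to edge-colour the remainder with at most $\lfloor 3k/2\rfloor-1$ colours, extract a monochromatic matching by pigeonhole, and add back one degree-$2$ vertex per Shannon subhypergraph. The only cosmetic difference is that the paper passes to the subhypergraph induced by vertices outside the Shannon pieces rather than deleting vertices from the dual graph, and it selects from each Shannon subhypergraph a degree-$2$ vertex not adjacent to anything outside it, which slightly streamlines the block-disjointness bookkeeping you flag as delicate.
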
-
\begin{proof}
    Let $H_0=(X_0,\B_0)$ be the subhypergraph of $H$ induced by the vertices not contained in $k$-Shannon subhypergraphs. The sizes of the blocks of $H_0$ are still at most of size $k$. Furthermore $H_0$ contains no $k$-Shannon subhypergraphs. 
    -
    Form a graph $G$ whose vertex set is $\B_0$ and there is an edge between $B_1$ and $B_2$ for each element $x\in B_1\cap B_2$ of degree $2$. Note that $G$ has at most $d_2-\left\lfloor\frac{3k+1}{2}\right\rfloor s$ edges, and contains no Shannon subgraphs of degree $k$. Suppose $\Delta(G)\leq 3$. Then \cref{shannonbound} implies the existence of a $c$-colouring with $c\leq 4\leq \left\lfloor\frac{3k}{2}\right\rfloor-1$. Otherwise we may apply \cref{Vizing} to find a $c$-colouring with $c\leq \left\lfloor \frac{3}{2}k\right\rfloor-1$. 
    
    So in any case, there must exist a set of edges with size at least $$\left\lceil\frac{d_2-\left\lfloor\frac{3k+1}{2}\right\rfloor s}{c}\right\rceil \geq \left\lceil\frac{d_2-\left\lfloor\frac{3k+1}{2}\right\rfloor s}{\left\lfloor\frac{3k}{2}\right\rfloor-1} \right\rceil$$ having the same colour. Any monochromatic set of $m$ edges of $G$ represent $m$ independent vertices of $H_0$ of degree $2$.
    
    Observe that every $k$-Shannon subhypergraph $S$ of $H$ has at least one vertex of degree $2$ which is not adjacent to any vertex outside of $S$; adding these $s$ vertices gives the lower bound in the theorem.
\end{proof}

An elementary rearrangement of the bound above yields the following.

\begin{cor}\label{shanrearrangement}With the hypotheses of the proposition, let $\delta_2$ be the maximum number of independent vertices in $H$ of degree $2$ and let $k=2m+\rho$ with $\rho\in \{0,1\}$. Then 
    \[d_2\leq \delta_2\left(\left\lfloor\frac{3k}{2}\right\rfloor-1\right) + (1 + \rho)s\]
\end{cor}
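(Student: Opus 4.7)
The plan is to unwind the bound from \cref{ninthdeg2} algebraically. Since $\delta_2$ is the maximum number of independent degree-$2$ vertices in $H$, the proposition immediately gives
\[
\delta_2 \;\geq\; s + \left\lceil \frac{d_2 - \left\lfloor \frac{3k+1}{2}\right\rfloor s}{\left\lfloor \frac{3k}{2}\right\rfloor - 1} \right\rceil.
\]
First I would drop the ceiling (using $\lceil x\rceil \geq x$) and clear the denominator $\lfloor 3k/2\rfloor - 1$, which is positive since $k\geq 3$. This yields
\[
(\delta_2 - s)\Bigl(\left\lfloor \tfrac{3k}{2}\right\rfloor - 1\Bigr) \;\geq\; d_2 - \left\lfloor \tfrac{3k+1}{2}\right\rfloor s,
\]
and rearranging puts all $d_2$ on one side:
\[
d_2 \;\leq\; \delta_2\Bigl(\left\lfloor \tfrac{3k}{2}\right\rfloor - 1\Bigr) + \Bigl(\left\lfloor \tfrac{3k+1}{2}\right\rfloor - \left\lfloor \tfrac{3k}{2}\right\rfloor + 1\Bigr)s.
\]

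The only remaining step is to identify the coefficient of $s$ with $1+\rho$. Writing $k = 2m+\rho$ for $\rho\in\{0,1\}$, a quick case analysis gives $\lfloor 3k/2\rfloor = 3m+2\rho - \rho = 3m + \rho$ (more carefully: $3m$ when $\rho=0$, and $3m+1$ when $\rho=1$) and $\lfloor (3k+1)/2\rfloor = 3m$ when $\rho=0$ and $3m+2$ when $\rho=1$. In either case the bracketed expression equals $1+\rho$, producing the stated bound.

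There is no real obstacle here: the corollary is a purely mechanical rewriting of \cref{ninthdeg2}, and the only point requiring any care is the parity case split for the floor functions, which I would handle by writing out the two cases $k=2m$ and $k=2m+1$ explicitly.
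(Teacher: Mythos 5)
Your proposal is correct and is exactly the ``elementary rearrangement'' the paper alludes to: apply \cref{ninthdeg2} to get $\delta_2 \geq s + \lceil\cdot\rceil$, drop the ceiling, clear the positive denominator $\lfloor 3k/2\rfloor-1$, and verify via the parity split $k=2m+\rho$ that the coefficient of $s$ collapses to $1+\rho$. The floor-function case analysis checks out in both cases, so there is nothing to add.
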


\begin{prop}\label{1s2s3s} Let $H=(X,\B)$ be a segregated $k$-uniform hypergraph with $|X|=n$, $|\B|=j$, $d_1=kr$, $k=2m+\rho$ and $\rho\in\{0,1\}$. Suppose $H$ contains $s$ distinct $k$-Shannon subhypergraphs. Then there is an independent set $I_0\subseteq X$ containing $r$ vertices of degree $1$ and \[\kappa:=s+\left\lceil\frac{d_2-\left\lfloor\frac{3k+1}{2}\right\rfloor s}{\left\lfloor\frac{3k}{2}\right\rfloor-1} \right\rceil\] vertices of degree $2$. Furthermore, $I_0$ can be extended to an independent set $I\subseteq X$ of order $\pi\geq\kappa+r$ whose vertices have degree at most $3$, provided 
    \[\tag{*} 4n-jk-\left(2 -m\right) s -(3k+m-2)(\pi-1)+(m-2)r>0.\]

% Furthermore if the left-hand side of the inequality (*) is instead equal to zero then there is an independent set $I$ of order $\pi-1$ with $d(I)\in\{1,2\}$, all vertices have degree at most $4$, and 

% \[d_2=(\pi-1-r)\left(\left\lfloor\frac{3k}{2}\right\rfloor-1\right) + (1 + \rho)s.\]
\end{prop}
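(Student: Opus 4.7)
The plan is to construct $I_0$ explicitly and then extend it one vertex at a time via a greedy argument.

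\textbf{Construction of $I_0$.} Since $H$ is segregated, every degree-$1$ vertex lies in an isolated block, so $d_1=kr$ means there are exactly $r$ isolated blocks; picking one vertex from each yields $r$ mutually independent degree-$1$ vertices. I then apply \cref{ninthdeg2} to produce $\kappa$ independent degree-$2$ vertices, with the following refinement: when $\rho=1$, I choose each of the $s$ Shannon representatives from the pairwise intersection $B_1\cap B_3$ of size $m+1$ rather than from one of size $m$. Such a vertex still has degree $2$ and has no neighbour outside its Shannon, so it is a valid choice in \cref{ninthdeg2}. Since isolated blocks share no vertex with any other block, $I_0$ is automatically independent, of size $r+\kappa$.

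\textbf{Extension.} I enlarge $I_0$ greedily. Suppose $I_0\subseteq I'$ is independent with $|I'|=\pi'-1<\pi$ and every vertex of degree at most $3$, and write $M(I')=\bigcup_{x\in I'}(N(x)\cup\{x\})$. A further vertex can be added to $I'$ iff the number $N_{\leq 3}=d_1+d_2+d_3$ of degree-$\leq 3$ vertices strictly exceeds $|M(I')|$, so I need three estimates: (a) each $x\in I'$ contributes at most $1+d(x)(k-1)$ to $|M(I')|$, sharpened to exactly $k$ for an isolated degree-$1$ vertex and to $|B_i\cup B_j|=2k-(m+\rho)$ for a Shannon representative in $B_i\cap B_j$---a uniform Shannon saving of $m-1+\rho$ per Shannon; (b) from $\sum_i i\,d_i=kj$ and $\sum_{i\geq 4}i\,d_i\geq 4(n-N_{\leq 3})$ one derives $3d_1+2d_2+d_3\geq 4n-kj$, hence $N_{\leq 3}\geq 4n-kj-2kr-d_2$; (c) solving the ceiling in the definition of $\kappa$ yields $d_2\leq(3m+\rho-1)\kappa+(1+\rho)s$.

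Substituting (c) into (b) and subtracting (a), the coefficient of $\kappa$ collapses to $-m$ via $k-3m-\rho=-m$, giving $N_{\leq 3}-|M(I')|\geq 4n-kj-2r-\kappa m+(m-2)s-(\pi'-1)(3k-2)$. A final application of $\kappa\leq\pi'-1-r$ (which holds because $I_0\subseteq I'$) converts $-\kappa m$ into $-m(\pi'-1)+mr$, producing exactly the left-hand side of $(*)$ with $\pi$ replaced by $\pi'$. Since this expression is non-increasing in $\pi'$, the hypothesis $(*)>0$ at $\pi$ guarantees positivity at every intermediate stage $\pi'\leq\pi$, so the greedy extension reaches $|I|=\pi$.

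The hard part will be identifying the Shannon saving as $m-1+\rho$ rather than the naive $m-1$: this forces the $\rho=1$ case to pick the Shannon representative from the \emph{largest} pairwise intersection, and one must confirm that such a vertex still satisfies the non-adjacency condition underlying \cref{ninthdeg2}. Thereafter the algebra is routine but requires careful tracking of the $\rho$-dependent terms to recover the precise coefficients $(m-2)r$ and $(m-2)s$ in $(*)$.
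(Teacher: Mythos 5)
Your proof is correct and follows essentially the same route as the paper's: the same three counting estimates (the block-union bound on the neighbourhood of the independent set with the Shannon saving $\left\lfloor\frac{3k}{2}\right\rfloor$ per representative, the degree-sum bound $3d_1+2d_2+d_3\geq 4n-jk$, and the rearranged form of \cref{ninthdeg2} bounding $d_2$) combine to give exactly the left-hand side of (*). The differences are presentational rather than substantive: you run a greedy extension with a monotonicity observation where the paper argues by contradiction from a maximal $I$, you bound $d_2$ via $\kappa$ directly rather than via \cref{shanrearrangement}, and you make explicit that for odd $k$ the Shannon representative must be taken in the larger intersection $B_1\cap B_3$ for the $\left\lfloor\frac{3k}{2}\right\rfloor$ count to hold---a point the paper leaves implicit.
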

Before giving the proof of the proposition, let us make some elementary observations that are used several times in the sequel. 

Let $H=(X,\B)$ be a $k$-uniform hypergraph with $|X|=n$ and $|\B|=j$. Recall $d_i$ is the number of vertices having degree $i$. Clearly

\begin{equation}\label{setsum}
    \sum_{i\geq 0}d_i=n.
\end{equation}

Let $\BB$ denote the multiset $\bigsqcup B_i$, which contains the vertices of the hypergraph counted with their multiplicities in the blocks $\B$. As there are $j$ blocks we have $|\BB|=jk$. Of course a vertex $x$ will occur in exactly $d(x)$ of the blocks, and so: 

\begin{equation}\label{multisetsum}
    \sum_{i\geq 1}id_i=jk.
\end{equation}

\begin{proof}[Proof of \cref{1s2s3s}] The existence of $I_0$ is immediate from \cref{ninthdeg2} and segregation. Moreover, we may assume $s$ of vertices of $I_0$ each live in distinct $k$-Shannon subhypergraphs, whose union accounts for at most $\lfloor\frac{3k+1}{2}\rfloor\cdot s$ vertices.
    Now take $I\supseteq I_0$ maximal subject to $d(I)\subseteq \{1,2,3\}$ and assume for a contradiction that $|I|\leq\pi-1$. let $\delta_i$ denote the number of vertices of $I$ of degree $i$; note that $\delta_1=r$, again by segregation. 
    
    We now bound from above and below the number $d_2+d_3$ of vertices in $X$ having degrees $2$ or $3$. 
    
     Let $J\subseteq I$ denote set of vertices not in isolated blocks. Then $\B_J$ contains at most $$(2k-1)(\delta_2-s)+\left\lfloor\frac{3k}{2}\right\rfloor s+(3k-2)\delta_3$$ distinct elements. Now if there were a vertex of degree $2$ or $3$ not in $\bigcup\B_J$, then $I$ was not maximal. Therefore,
    \begin{align}d_2+d_3&\leq (2k-1)(\delta_2-s)+\left\lfloor\frac{3k}{2}\right\rfloor s+(3k-2)\delta_3,\nonumber\\
        &\leq (2k-1)(\delta_2-s)+\left\lfloor\frac{3k}{2}\right\rfloor s+(3k-2)(\pi-1-r-\delta_2),\nonumber\\
        \notag&\hspace{7cm} \text{since }\delta_1+\delta_2+\delta_3\leq \pi-1,\nonumber\\
        &\leq (1-k)\delta_2+\left(\left\lfloor\frac{3k}{2}\right\rfloor-2k+1 \right)s +(3k-2)(\pi-1-r),\nonumber\\
        &\leq (1-k)\delta_2+\left\lfloor\frac{2-k}{2}\right\rfloor  s +(3k-2)(\pi-1-r)
               %&\leq (1-k)\delta_2+(1-\left\lfloor\frac{k}{2}\right\rfloor)≈ s +(3k-2)(\pi-1-r).
               \label{combine}
    \end{align}

    On the other hand, consider the multiset $\BB=\bigsqcup_{B\in\B} B$, of order $jk$. The $I$-blocks contribute exactly $\delta_1k+2\delta_2k+3\delta_3k$ of these elements. Since there are $2d_2+3d_3$ elements of degrees $2$ or $3$, there are $2\delta_2k+3\delta_3k-2d_2-3d_3$ elements of $\BB$ of order at least $4$ coming from the $I$-blocks. The remaining $(j-\delta_1-2\delta_2-3\delta_3)$ blocks all consist of elements of order at least $4$. Thus the multiset $|\BB_{\geq 4}|$ of all elements of degrees $4$ and above has order
    \[|\BB_{\geq 4}|=(j-\delta_1-2\delta_2-3\delta_3)k + 2\delta_2k+3\delta_3k-2d_2-3d_3 =jk-rk-2d_2-3d_3.\] 
    Since $|\BB_{\geq 4}|=\sum_{i\geq 4} id_i$, we get 
    \[\sum_{i\geq 4} d_i \leq \frac{jk-rk-2d_2-3d_3}{4}.\]
    So \[n=\sum d_i\leq rk+d_2+d_3+\frac{jk-rk-2d_2-3d_3}{4}.\] 
    
    Rearranging gives $d_2+d_3\geq 4n-jk-d_2-3rk$,   
    which combines with (\ref{combine}) to give 
    \begin{eqnarray*} (1-k)\delta_2+\left\lfloor\frac{2-k}{2}\right\rfloor s +(3k-2)(\pi-1-r) \geq 4n-jk-d_2-3rk;\\
        \text{i.e.}\quad  d_2+(1-k)\delta_2 \geq 4n-jk-\left\lfloor\frac{2-k}{2}\right\rfloor s -(3k-2)(\pi-1)-2r.\end{eqnarray*}
    
    Now using \cref{shanrearrangement} we get
    \begin{align*}
        \delta_2\left(\left\lfloor\frac{3k}{2}\right\rfloor-1\right)& + (1 + \rho)s +(1-k)\delta_2\geq 4n-jk-\left\lfloor\frac{2-k}{2}\right\rfloor s -(3k-2)(\pi-1)-2r\\
        \left\lfloor\frac{k}{2}\right\rfloor\delta_2  &\geq 4n-jk-\left(\left\lfloor\frac{2-\rho}{2}\right\rfloor+ 1 + \rho-m\right) s -(3k-2)(\pi-1)-2r\\
        \left\lfloor\frac{k}{2}\right\rfloor\delta_2  &\geq 4n-jk-\left(2 -m\right) s -(3k-2)(\pi-1)-2r.
    \end{align*}
    Using $\delta_1+\delta_2\leq \pi-1$, with equality if and only if $\delta_3=0$, then
    \[\left\lfloor\frac{k}{2}\right\rfloor(\pi-1-r)  \geq 4n-jk-\left(2 -m\right) s -(3k-2)(\pi-1)-2r;\]
    so
    \[4n-jk-\left(2 -m\right) s -(3k+m-2)(\pi-1)+(m-2)r\leq 0,\]
    which is a contradiction, proving the 
    proposition.
    \end{proof}

The following results are all used to give constraints to Prolog. The first two respectively bound above and below the number of isolated blocks.

\begin{lemma}\label{rbound}
    Let $(X, \mathcal{B} )$ be a segregated $(n, k, p, 2; j)$-lottery design. Suppose that $\B$ has $r$ isolated blocks. Then,
    $$ r \geq \left\lceil\frac{2n-jk}{k} \right\rceil. $$
 \end{lemma}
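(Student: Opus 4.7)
\textbf{Proof plan for \cref{rbound}.} The plan is to exploit segregation to partition the vertex set by degree, then double-count using the two identities (\ref{setsum}) and (\ref{multisetsum}). Since $H$ is segregated, we have $d_0=0$ and every degree-$1$ vertex lies in an isolated block; each of the $r$ isolated blocks contributes exactly $k$ vertices of degree $1$, so $d_1=kr$.

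Next I would apply (\ref{setsum}) and (\ref{multisetsum}) restricted to degrees $\geq 2$: subtracting the degree-$1$ contribution from each identity gives
\[\sum_{i\geq 2} d_i = n - kr \qquad \text{and}\qquad \sum_{i\geq 2} i\,d_i = jk - kr.\]
The trivial inequality $\sum_{i\geq 2} i\,d_i \geq 2\sum_{i\geq 2} d_i$ then yields $jk - kr \geq 2(n-kr)$, i.e.\ $kr \geq 2n - jk$. Dividing by $k$ and using that $r$ is an integer gives the desired bound.

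There is no real obstacle here: the lottery-design and uniformity hypotheses enter only implicitly through the fact that segregation is well-defined, and $p$, $t$ play no role beyond what has already been absorbed into the definition of a segregated design. The argument is essentially a one-line double count.
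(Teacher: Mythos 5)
Your proof is correct and is essentially identical to the paper's: both arguments are the double count $jk=\sum_i i\,d_i\geq d_1+2(n-d_1)$ combined with $d_1=kr$ from segregation, followed by rearranging and taking the ceiling. Nothing further to add.
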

\begin{proof}We have $jk=d_1+\sum_{i\geq 2} id_i \geq d_1+2(n-d_1)$. Write $d_1=rk$ and rearrange to get the formula above.
\end{proof}

\begin{lemma}\label{rboundforfuredi}Let $H=(X,\B)$ be a segregated $(n, k, p, t; j)$-lottery design with $k=2m$, at least $r$ isolated blocks and at least $s$ disjoint $k$-Shannon subhypergraphs. Then 
\[L(n-2mr-3ms,k,p-s-r,t)\leq j-r-3s\]    
    %$$r\leq j-L(n-rk,k,p-r,t).$$
\end{lemma}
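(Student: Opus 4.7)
The plan is to construct the smaller lottery design by deletion: remove the $r$ isolated blocks together with their $2mr$ vertices, and the $s$ disjoint $k$-Shannon subhypergraphs together with their $3ms$ vertices. Let $X'$ be the set of remaining vertices and $\B'$ the set of remaining blocks. Since $k=2m$, each $k$-Shannon subhypergraph $S_i$ satisfies (by the definition in the excerpt) that $H$ splits as a disjoint union of $S_i$ with the rest, so the vertices of $S_i$ do not appear in any block outside $S_i$; the same is automatic for the isolated blocks. Therefore $|X'|=n-2mr-3ms$ and $|\B'|=j-r-3s$, and every block in $\B'$ is contained in $X'$.

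Next I would verify that $H'=(X',\B')$ is a $(|X'|,k,p-s-r,t)$-lottery design, i.e.\ that any draw $D'\subseteq X'$ with $|D'|=p-s-r$ is hit in at least $t$ places by some block of $\B'$. Given such a $D'$, I extend it to a draw $D\subseteq X$ of order $p$ by adjoining one vertex from each isolated block (call them $y_1,\dots,y_r$) and one vertex from each Shannon subhypergraph (call them $z_1,\dots,z_s$). Since $H$ is a lottery design, there exists $B\in\B$ with $|B\cap D|\geq t$.

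The heart of the argument is then to show $B\in\B'$. If $B$ is an isolated block, its only possible intersection with $D$ is the single vertex $y_i$ we picked from it, so $|B\cap D|\leq 1<t$, a contradiction. If $B$ is a block of some Shannon subhypergraph $S_i$, then the only way $B$ meets $D$ is through $z_i$, because $D'\subseteq X'$ is disjoint from the vertices of $S_i$ by construction; hence $|B\cap D|\leq 1<t$, again a contradiction. Therefore $B\in\B'$, and since $B\subseteq X'$ we have $B\cap D=B\cap D'$, so $|B\cap D'|\geq t$, as required. This yields $L(n-2mr-3ms,k,p-s-r,t)\leq |\B'|=j-r-3s$.

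The only delicate point is that for a Shannon subhypergraph with $k=2m$ the vertices are all of degree exactly $2$ and each such vertex lies in precisely two of the three blocks of $S_i$, so picking any $z_i\in S_i$ automatically contributes at most $1$ to the intersection with any single Shannon block; no clever choice of $z_i$ is needed. This is the reason the lemma is restricted to $k$ even: when $k$ is odd, the Shannon subhypergraph may share a vertex with the rest of $H$, and the clean splitting argument above would need to be adjusted. I do not expect any further obstacles; the remainder is bookkeeping on the cardinalities of $X'$ and $\B'$.
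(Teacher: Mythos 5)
Your proposal is correct and follows essentially the same route as the paper: delete the isolated blocks and the (disjoint) Shannon subhypergraphs, extend any draw of size $p-r-s$ on the remaining vertices to one of size $p$ by adjoining one vertex from each deleted piece, and observe that the block matching the extended draw in $\geq t\geq 2$ places can meet each deleted piece in at most one vertex, hence lies in the residual design. Your closing remark on why $k=2m$ is needed (so that Shannon subhypergraphs split off cleanly) is exactly the point the paper's definition of $k$-Shannon guarantees.
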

\begin{proof}Suppose $B_1,\dots,B_r$ are isolated and $H_1\dots H_s$ are the disjoint $k$-Shannon subhypergraphs of $H$, with $H_i=(X_i,\{C_{i1},C_{i2},C_{i3}\})$. Let $Y$ be the remaining vertices inducing a subhypergraph $H_0=(Y,\B_0)$ of $H$ where $|\B_0|=j-r-3s$. Fix one vertex $v_i\in B_i$ for $1\leq i\leq r$ and $v_{r+1}\dots v_{r+s}$ in each of the $C_{i1}$ with $1\leq i\leq s$. For any choice $\{v_{r+s+1},\dots,v_p\}$ of vertices from $Y$, the draw $D=\{v_1,\dots,v_p\}$ intersects with some $B\in \B$ in at least $t$ vertices. By construction of $D$, $B$ is neither isolated nor one of the $C_{ij}$. But this means $B$ must match $t$ of the remaining $p-r-s$ elements of $D$. Hence $H_0$ is an $(n-2mr-3ms,k,p-s-r,t;j-r-3s)$-lottery design, which implies the inequality as shown.\end{proof}

\begin{lemma}\label{lowerboundd2}Let $H$ be a segregated $k$-uniform hypergraph. Then 
    \[d_2\geq 3n-2rk-jk\]
\end{lemma}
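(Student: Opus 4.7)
The plan is to obtain the bound by a short manipulation of the two basic identities \eqref{setsum} and \eqref{multisetsum}, exploiting the segregation hypothesis to control the contributions from low-degree vertices.

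First I would invoke segregation to pin down the vertex counts at the bottom of the degree spectrum. Segregation requires $d_0=0$, and the fact that the $r$ isolated blocks supply precisely $rk$ vertices of degree one (and contain all of them) gives $d_1 = rk$. So \eqref{setsum} specialises to
\[
rk + d_2 + \sum_{i \geq 3} d_i = n,
\]
and \eqref{multisetsum} specialises to
\[
rk + 2 d_2 + \sum_{i \geq 3} i\, d_i = jk.
\]

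Next I would eliminate the awkward tail $\sum_{i\geq 3} i d_i$. The obvious estimate $\sum_{i\geq 3} i\,d_i \geq 3\sum_{i\geq 3} d_i$ converts the two displayed equations into an inequality of the form
\[
rk + 2d_2 + 3\bigl(n - rk - d_2\bigr) \leq jk,
\]
after substituting $\sum_{i\geq 3}d_i = n - rk - d_2$ from the first identity into the second. Simplifying the left-hand side collapses to $3n - 2rk - d_2 \leq jk$, i.e.\ $d_2 \geq 3n - 2rk - jk$, which is the claimed bound.

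There is really no obstacle here: both identities are recorded just above the statement, and the only non-trivial input is the segregation hypothesis, which lets us write $d_1=rk$ exactly rather than only bound it. If desired one could also note that equality holds precisely when every vertex of $H$ has degree in $\{1,2,3\}$, a remark that may be useful elsewhere but is not needed for the bound itself.
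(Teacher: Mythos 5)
Your proof is correct and follows essentially the same route as the paper: both arguments combine the identities $\sum d_i = n$ and $\sum i\,d_i = jk$ with the estimate that each vertex of degree at least $3$ contributes at least $3$ to the degree sum (equivalently, at least $2$ to $\sum(i-1)d_i$), together with $d_1 = rk$ from segregation. The only difference is cosmetic: you substitute one identity into the other, while the paper subtracts them first.
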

\begin{proof}We recall $\sum d_i=n$ and $\sum id_i=jk$, which implies 
    \[jk-n=d_2+\sum_{i\geq 3}(i-1)d_i\geq d_2+2\sum d_i=-2d_1-d_2+2n.\qedhere\]\end{proof}

\begin{lemma}\label{minnumiblocks}
    Let $(X,\B)$ be a $k$-uniform hypergraph with maximal independent set $I$. Then 
    %segregated $(n,k,p,2;j)$-lottery design with maximal independent set $I$. Then 
    $$|\B_I|\geq \left\lceil\frac{n-p+1}{k-1}\right\rceil.$$
\end{lemma}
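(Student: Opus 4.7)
The plan is to combine two simple facts about $\B_I$ when $I$ is a \emph{maximal} independent set: each block of $\B_I$ contains very few vertices of $I$, and every vertex of $X$ must be reached by some block of $\B_I$. Together these give an inequality of the form $(k-1)|\B_I| \geq n - |I|$.

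First I would observe that each block $B \in \B_I$ meets $I$ in exactly one vertex: it meets $I$ in at least one vertex by definition of $\B_I$, and if it met $I$ in two or more, then those two vertices of $I$ would be adjacent, violating independence. Hence the remaining $k-1$ positions in $B$ lie in $X \setminus I$, so the non-$I$ vertices covered by $\B_I$ number at most $(k-1)|\B_I|$.

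Next I would invoke \cref{indsetrems}(ii): since $I$ is maximal, $\bigcup \B_I = X$, so in particular every vertex of $X \setminus I$ is covered by $\B_I$. This gives $n - |I| \leq (k-1)|\B_I|$. Finally, \cref{indsetrems}(i) supplies the upper bound $|I| \leq p-1$, so $n - |I| \geq n - p + 1$; dividing by $k-1$ and taking the ceiling (since $|\B_I|$ is an integer) yields the claimed bound.

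There is no real obstacle here — the only subtlety is recognising that ``$I$ maximal'' is used in two distinct ways (through \cref{indsetrems}(i) and (ii)), and that the hypothesis implicitly requires the ambient $p$ of a lottery design in order for the bound $|I| \leq p-1$ to apply.
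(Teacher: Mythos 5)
Your proposal is correct and is essentially the paper's argument: both rest on the facts that independence forces each block of $\B_I$ to meet $I$ exactly once (so the stars $\B_x$, $x\in I$, are disjoint and each block contributes at most $k-1$ vertices outside $I$), that maximality forces $\bigcup\B_I=X$, and that $|I|\leq p-1$. Your closing remark is also apt --- the bound $|I|\leq p-1$ does implicitly import the lottery-design context via \cref{indsetrems}(i), exactly as the paper's proof does.
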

\begin{proof}
    We must have $\bigcup \B_I=X$ or $I$ is not maximal. For $x\in I$, two blocks $B,C\in\B_x$ intersect in at least $x$ so $|\bigcup \B_x|\leq (k-1)|\B_x|+1$. Since $|I|\leq p-1$, then summing over $x\in I$ yields $n\leq (k-1)|\B_I|+p-1$.
\end{proof}

\subsection{Excess, toes and webbings}\label{sec:toes}For the values of $(n,k,p)$ under consideration, one tends to find $L(n,k,p,2)\sim n/2$. Thus, on average, the degree of a vertex in an $(n,k,p,2)$-lottery design is about $2$. The next definition follows \cite{BateRees} with a view to bounding the extent of departure from this average value.
\begin{defin}For a set of vertices $Y\subseteq X$,  the \emph{excess} of $Y$ is the sum \[\E(Y)=\sum_{i>2}(i-2)\cdot |d^{-1}(i)\cap Y |.\]\end{defin}

Note that if $Y=X$, we get $\E(X)=\sum_{i>2}(i-2)\cdot d_i$. The following gives an easy characterisation of $\E(X)$.

\begin{lemma}\label{excessfromisolated} Let $H=(X, \mathcal{B} )$ be a segregated hypergraph with $r$ isolated blocks. Then
    \[\E(X)=jk+rk-2n.\]\end{lemma}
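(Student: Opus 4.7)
The plan is to compute $\mathcal{E}(X)$ directly from the two basic counting identities \eqref{setsum} and \eqref{multisetsum}, using the hypothesis that segregation fixes $d_1$ in terms of $r$.

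First I would rewrite the excess by splitting the sum:
\[\mathcal{E}(X) = \sum_{i\geq 3}(i-2)d_i = \sum_{i\geq 3} i\,d_i \;-\; 2\sum_{i\geq 3} d_i.\]
Next I would apply the segregation hypothesis: since $d_0 = 0$ and every degree-$1$ vertex lies in an isolated block, each of the $r$ isolated blocks accounts for exactly $k$ vertices of degree $1$, giving $d_1 = rk$. Using \eqref{multisetsum},
\[\sum_{i\geq 3} i\,d_i = jk - d_1 - 2d_2 = jk - rk - 2d_2,\]
and using \eqref{setsum},
\[\sum_{i\geq 3} d_i = n - d_1 - d_2 = n - rk - d_2.\]

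Substituting back, the $d_2$ terms cancel and we obtain
\[\mathcal{E}(X) = (jk - rk - 2d_2) - 2(n - rk - d_2) = jk + rk - 2n,\]
as claimed. There is no real obstacle here; the only point that requires the hypothesis is identifying $d_1 = rk$ from segregation, and the rest is bookkeeping with the two elementary identities already highlighted in the excerpt.
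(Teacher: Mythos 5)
Your proof is correct and follows essentially the same route as the paper's: both arguments combine the identities $\sum_i d_i = n$ and $\sum_i i\,d_i = jk$ with the observation that segregation forces $d_1 = rk$, the only cosmetic difference being that you track the $d_2$ cancellation explicitly while the paper computes $jk - 2n = -d_1 + \E(X)$ in one line.
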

   \begin{proof}We have $jk=\sum id_i$. Since $n=\sum d_i$, we get \[jk-2n=-d_1+\sum_{i\geq 3}(i-2) d_i=-d_1+\E(X).\qedhere\]
\end{proof}

The possible number of toes is constrained by the value of the excess $\E(X)$, in a manner we now describe. First, the following is \cite[Lem.~4.7]{BateRees} after the removal of a significant typo. 

\begin{lemma}\label{mintoes}
    Let $(X, \mathcal{B} )$ be a segregated $(n, k, p, 2; j)$-lottery design with maximal independent set $I$ and $F_I$ its foot. Suppose further that $\B$ has $r$ isolated blocks. Then,
    $$ |F_I| \geq 2n - 2p +2 -(k-1)(|\B_I|+r).$$
\end{lemma}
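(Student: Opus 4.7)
The plan is to carry out a double counting of the multiset of vertex occurrences across the $I$-blocks, localised to the complement of $I$.

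First I would use the hypotheses to classify the vertices in $X\setminus I$. Since $I$ is a maximal independent set and $H$ is segregated, one can check that $\bigcup \B_I = X$ and that every isolated block contains exactly one vertex of $I$: otherwise an isolated block would consist of degree-$1$ vertices none of which is adjacent to any element of $I$, contradicting maximality. Hence the $r(k-1)$ degree-$1$ vertices not in $I$ are precisely the non-$I$ vertices of the isolated blocks. The remaining non-$I$ vertices have degree at least $2$ and split into two classes: those appearing in exactly one $I$-block (i.e.\ the $I$-toes, making up $F_I$), and those appearing in at least two $I$-blocks. Because each block of $\B_I$ contains exactly one vertex of $I$ (again by independence), each $B\in\B_I$ contributes exactly $k-1$ elements to the multiset $\bigsqcup_{B\in \B_I}(B\setminus I)$.

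Second, I would count this multiset in two ways. From the block side, its cardinality is $(k-1)|\B_I|$. From the vertex side, each degree-$1$ vertex outside $I$ contributes $1$, each toe contributes $1$, and each non-toe of degree $\geq 2$ contributes at least $2$. Writing $a = r(k-1)$ for the number of degree-$1$ vertices outside $I$ and $c$ for the number of remaining non-toe vertices outside $I$, so that $a + |F_I| + c = n - |I|$, this yields
\[(k-1)|\B_I| \;\geq\; a + |F_I| + 2c \;=\; 2(n-|I|) - a - |F_I|.\]
Rearranging and substituting $a = r(k-1)$ gives the key inequality
\[|F_I| \;\geq\; 2n - 2|I| - (k-1)(|\B_I| + r).\]

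Finally, I would apply $|I| \leq p-1$ from \cref{indsetrems}(i) to obtain the stated bound. The most delicate step is the opening classification: one has to justify that every isolated block meets $I$ (so that $a = r(k-1)$ rather than something smaller) and that no non-$I$ vertex is missed by $\B_I$. Both follow from maximality of $I$ together with segregation, but they are the substance of the argument; the rest is a one-line double count.
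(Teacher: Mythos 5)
Your proof is correct and follows essentially the same double count as the paper: the paper forms the multiset $\bigsqcup_{x\in I,\,d(x)>1,\,B\in\B_x}B\setminus\{x\}$ of size $(k-1)(|\B_I|-r)$ and bounds multiplicities of toes versus non-toes, whereas you include the isolated blocks in the multiset and offset this by counting the $r(k-1)$ degree-one vertices separately --- an equivalent bookkeeping choice yielding the same inequality. Your preliminary observations (each block of $\B_I$ and each isolated block meets $I$ exactly once, $\bigcup\B_I=X$, non-toes of degree $\geq 2$ lie in at least two $I$-blocks) are all justified by maximality, independence and segregation exactly as needed.
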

\begin{proof}As $I$ is maximal, we have $\bigcup\B_I=X$. The multiset \[\mathscr{R}:=\bigsqcup_{x\in I,d(x)>1,B\in \B_x} B\setminus\{x\}\] contains the $kj-rk-|I|$ vertices which are neither isolated nor contained in $I$, and by definition, the toes are the elements of multiplicity $1$ in $\mathscr{R}$. Therefore each of the remaining $kj-rk-|I|-|F_I|$ vertices appears at least twice in $\mathscr{R}$. Since $|\mathscr{R}|=(k-1)(|\B_I|-r)$, we have 
    \[|F_I|+2(n-rk-(|I|-r)-|F_I|)\leq (k-1)(|\B_I|-r).\]
Rearranging and using $|I|\leq p-1$, we get the inequality as claimed.
\end{proof}

Now suppose $x\in I$ for $I$ a maximal independent set, with $F_x$ the set of $x$-toes. Let us assume 
\begin{equation*}|F_x|\geq k.\tag{*}\end{equation*}
Then it follows there are at least two blocks $B,C\in \B_x$, say, containing (necessarily distinct) toes; say $y\in B$ and $z\in C$. If $y$ and $z$ were not adjacent, then replacing $x$ with $y,z$ in $I$ would yield a larger independent set, a contradiction. Thus there is a block $W$ with $y,z\in W$. We refer to such blocks as \emph{webbings}. Formally, $W$ is an \emph{$x$-webbing} if $W\not\in\B_I$ and $W$ contains distinct $x$-toes; the set of $x$-webbings is later denoted $\W_x$. Note that under the assumption (*), each toe appears at least once in a webbing, so that it must have degree at least $2$. 

More precisely, suppose there are $\tau_i$ toes in distinct $x$-blocks $B_i$ with $\tau_1\geq\tau_2\dots\geq\tau_s\geq 1$. Then each of the $\sum_{i<j}\tau_i\tau_j$ pairs of toes must appear in some webbing. In the case $k=6$ and $|F_x|\geq 7$, one can see that some toes must have degree at least $3$, for example. This implies non-trivial lower bounds on the excess $\E(F_x)\leq \E(X)$.

\begin{lemma}\label{toetable}
    Let $x\in X$ with $x$ of degree $2$ or $3$ and $F_x$ the set of $x$-toes. The table below gives minimum values of $\E(F_x)$ in terms of $|F_x|$.
    \begin{center}
    \begin{table}[ht]
    \begin{tabular}{ c|c|c|c|c|c|c|c|c|c|c|c| } 
    $|F_x|$ & $\leq 5$ & $6$ & $7$ & $8$ & $9$ & $10$ & $11$ & $12$ & $13$ & $14$ & $15$\\\hline
    $\min(\E(F_x))$ & $0$ & $0$ & $2$ & $3$ & $7$ & $10$ & $11$ & $12$ & $20$ & $25$ & $27$
    \end{tabular}\bigskip
    \caption{Minimal excesses implied by numbers of toes\label{table:exs}}
    \end{table}
    \end{center}
If moreover $F_x$ is known to contain no elements of degree $2$, then $\min(\E(F_x))\geq |F_x|$.\end{lemma}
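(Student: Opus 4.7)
The plan is to exploit the webbing structure developed in the paragraph preceding the lemma: any two $x$-toes in distinct $x$-blocks must share at least one $x$-webbing $W$, which is a block of cardinality $k=6$ disjoint from $I$. The final clause is immediate: if no $y\in F_x$ has degree $2$, then each toe contributes $d(y)-2\geq 1$ to $\E(F_x)$, so $\E(F_x)\geq |F_x|$.

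For the main table I would introduce the total toe--webbing incidence count $T:=\sum_{W\in\W_x}|W\cap F_x|$ and first establish $\E(F_x)\geq T-|F_x|$. For each toe $y$, any non-$x$-block containing $y$ lies outside $\B_I$ (because $y$ is in only one $I$-block by the toe definition), so the number of webbings through $y$ is at most the number $d(y)-1$ of non-$x$-blocks through $y$; summing over $F_x$ gives $T\leq\sum_y(d(y)-1)=|F_x|+\E(F_x)$. Hence it suffices, for each value of $|F_x|$ in the table, to prove a matching lower bound on $T$ over every admissible configuration. A configuration is parametrised by a toe-split $(\tau_1,\ldots,\tau_{d(x)})$ of $|F_x|$ with each $\tau_j\leq k-1=5$, together with webbings $W_1,\ldots,W_s$ of toe-profiles $w^{(i)}=(w^{(i)}_1,\ldots,w^{(i)}_{d(x)})$ satisfying $t_i:=\sum_j w^{(i)}_j\leq 6$, required to cover every cross pair; equivalently, the ``partial multi-cliques'' $K_{w^{(i)}}$ cover every edge of the complete multipartite graph $K_{\tau_1,\ldots,\tau_{d(x)}}$.

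A systematic inequality feeds the optimisation. Summing the pointwise estimate $\sum_{W\ni y}t_W\geq(|F_x|-\tau_j)+n_y$ (valid for $y\in F_x\cap B_j$, where $n_y$ is the number of webbings through $y$, and obtained from the fact that $y$ must share a webbing with each of the $|F_x|-\tau_j$ toes outside $B_j$) over all $y\in F_x$ yields
\[\sum_W t_W^2 \;\geq\; 2P + T, \qquad P=\sum_{r<s}\tau_r\tau_s.\]
Combined with $t_W\leq 6$ and the direct cover condition $\sum_W\sum_{r<s}w_r^{(W)}w_s^{(W)}\geq P$, this suffices to rule out all sub-threshold partitions of $T$ in most splits.

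The main obstacle is then the finite case analysis, carried out split by split. For $|F_x|\leq 6$ the argument collapses: a single webbing of balanced profile already covers all cross pairs with every toe of degree $2$, so $\E=0$ is attainable. For $7\leq|F_x|\leq 15$ the number of admissible splits is small (at most around ten per row), and for each split one uses the above inequalities to obtain a lower bound on $T$, then exhibits an explicit optimal cover achieving it; the table entry for $|F_x|$ is the minimum of $T_{\min}-|F_x|$ across splits, since the lemma claims a bound valid for every configuration. The jumps at $|F_x|=9$ and $|F_x|=13$ reflect splits in which the covering constraint is tight and demand the most careful treatment, both to prove the lower bound and to produce the extremising configuration.
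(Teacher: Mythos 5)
Your reduction is the same one the paper uses: encode each configuration as a set of webbings with row sums $t_W\leq 6$, require every cross pair of toes from distinct $x$-blocks to be covered, and observe that $\E(F_x)\geq T-|F_x|$ where $T$ is the total toe--webbing incidence (the paper phrases this as minimising the sum $S=\E(F_x)+|F_x|$ of a $0/1$ matrix with row sums at most $6$ and pairwise column products at least $1$ across parts). Your handling of the final clause and of $|F_x|\leq 6$ also matches the paper. The difference is in how the resulting finite optimisation is solved: the paper hands it to an integer-programming solver (Gurobi) and then independently certifies optimality of each table entry with additional Prolog code, whereas you propose to do it by hand via the inequality $\sum_W t_W^2\geq 2P+T$ together with a split-by-split case analysis.

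The gap is that this case analysis --- which is the entire numerical content of the lemma --- is not carried out, and the one quantitative tool you supply is demonstrably too weak to carry it. For example, for $|F_x|=7$ with split $(4,3)$ one has $P=12$, and combining your inequality with $t_W\leq 6$ gives only $6T\geq 24+T$, i.e.\ $T\geq 5$, whereas the true minimum for that split is $T=10$ and the table requires $T\geq 9$ for every split of $7$. So ``this suffices to rule out all sub-threshold partitions in most splits'' is not accurate even in the smallest nontrivial row; each split needs a genuinely ad hoc covering argument. The situation is worst exactly where the lemma earns its keep: the entry $\min\E(F_x)=20$ at $|F_x|=13$ improves the previously published bound of $16$, and the paper reports that certifying the infeasibility of excess $19$ (and of $26$ at $|F_x|=15$) took hours of solver time. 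Nothing in your sketch indicates how the proposed inequalities would reach those values, so as written the table remains unproved; you would either need to exhibit the full combinatorial analysis for each of the roughly ten splits per row, or, as the paper does, delegate the optimisation to a verified computation.
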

\begin{proof}For the second statement of the lemma, just observe that any $x$-toe of degree at least $3$ contributes at least $1$ to $\E(F_x)$.

For the table itself, suppose $|F_x|=\tau_1+\tau_2+\tau_3$ is a partition of $|F_x|$ into summands of size at most $6$. If $|F_x|\leq 6$, then one webbing $W$ suffices to cover all toes, and so the minimum possible excess of $0$ is a achieved by a configuration of $x$-blocks and one webbings in which each toe appears just twice. Otherwise suppose there are $w>1$ webbings, containing each of the $\tau_1\tau_2+\tau_1\tau_3+\tau_2\tau_3$ pairs of toes, so that 
\[\left\lceil\frac{\tau_1\tau_2+\tau_1\tau_3+\tau_2\tau_3}{3}\right\rceil.\] is an upper limit for $w$. 

To save some time, we used the powerful linear programming solver Gurobi to solve the following problem. let $M$ be a $w\times |F_x|$-matrix of variables taking values in $0$ and $1$, with the rows representing webbings and a $1$ appearing in the $(i,j)$-th entry if a toe labelled $j$ is in the $i$th webbing. Since $|F_x|>6$, we know that toe $j$ appears once in the $x$-blocks and at least once in the webbings, so $\E(F_x)+|F_x|$ is the sum $S$ of all entries of the $M$. Furthermore, the rows of $M$ must all sum up to integers less than or equal to $k=6$, and columns $j_1$ and $j_2$ must have scalar product at least $1$ whenever $j_1$ and $j_2$ come from different parts of the partition \[\{1,\dots,n\}=\{1,\dots,\tau_1\}\cup\{\tau_1+1,\dots,\tau_1+\tau_2\}\cup\{\tau_1+\tau_2+1,\dots,\tau_1+\tau_2+\tau_3\}.\] We ask Gurobi to minimise the sum $S$ subject to these constraints, and it results in the table in the lemma. 

Since Gurobi only gives answers up to a percentage accuracy, its output does not amount to a proof of optimality, and so we wrote some additional Prolog code to check that the values of $\min(F_x)$ one below those in the table are infeasible.\footnote{Code is available at \url{github.com/cushydom88/lottery-problem}} %See \cref{appendb}.
\end{proof}

\begin{rmks}(i) This improves the bound in \cite[Table 1]{BateRees} for $\min(|F_x|)$ when $|F_x|=13$ from $16$ to $20$. Since Gurobi outputs a feasible configuration of webbings for each value of $|F_x|$, we know that the minima can be achieved.

(ii) Establishing the values in \cref{table:exs} is by far the most computationally intensive task required in the paper---for the value $n=15$, the solver took about $6$ hours to rule out the possibility of an excess of $26$. By contrast, the main Prolog program that uses this precomputed data only takes about $2$ minutes to do all the cases from $n=30$ to $n=70$ on an M1 Macbook Air, running Sicstus Prolog (currently only availabel on an Intel build) in emulation mode. It is highly likely another CP solver running natively on a better computer would do this in a fraction of the time.\end{rmks}

The following rather specific two results lead to some surprisingly effective constraints. The first is an easy check left to the reader.

\begin{lemma}\label{reduce3to2}
    Let $H=(X,\B)$ be a hypergraph and $I\subseteq X$ an independent set. Let $x,y\in I$ with $d(x)=2$, $d(y)=3$ and $z$ another vertex of degree $2$ adjacent to $x$ and $y$. Suppose there exists a toe $w$ of degree $2$ adjacent to $x$ but not adjacent to $z$. Then $I'=I\setminus \{x,y\}\cup\{w,z\}$ is an independent set. 
\end{lemma}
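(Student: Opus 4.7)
The plan is essentially to unpack definitions and verify the four required non-adjacency conditions, one per pair of vertices in $I'$. Since elements of $I\setminus\{x,y\}$ are already pairwise non-adjacent, the real content is to show $w$ and $z$ are non-adjacent to each other, to all elements of $I \setminus \{x, y\}$, and that $w \ne z$.

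First, I would note that $w \ne z$ since $w$ is not adjacent to $z$ by hypothesis (in particular they are distinct), and observe $w, z \notin I$ since both are adjacent to $x \in I$ and $I$ is independent. It remains to verify two things: (a) $z$ shares no block with any $v \in I \setminus \{x,y\}$; (b) $w$ shares no block with any $v \in I \setminus \{x,y\}$.

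For (a), use the degree constraint on $z$. Since $z$ is adjacent to $x$ and $y$ via blocks $B_1 \in \B_x$ and $B_2 \in \B_y$, with $B_1 \ne B_2$ (as $I$ is independent), and $d(z) = 2$, the stars of $z$ is exactly $\{B_1, B_2\}$. If $z$ were adjacent to $v \in I \setminus \{x,y\}$, then $v \in B_1$ or $v \in B_2$, contradicting that $I$ is independent (as $x \in B_1$ and $y \in B_2$). For (b), I would invoke the toe property directly: since $w$ is an $I$-toe, $w$ lies in exactly one block of $\B_I$, and since $w$ is adjacent to $x$, that unique block is some $B \in \B_x$. Hence $w$ is not in any block of $\B_{I \setminus \{x\}}$, which forces $w$ to be non-adjacent to every element of $I \setminus \{x\}$, and in particular to every element of $I \setminus \{x, y\}$.

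Combining these with the hypothesis that $w$ and $z$ are non-adjacent gives that every pair of distinct vertices in $I'$ fails to share a block, so $I'$ is independent. The only subtle point, which I would flag explicitly, is the use of the toe definition: an $I$-toe is required to lie in precisely one block of $\B_I$, which is exactly what rules out $w$ being adjacent to any second element of $I$. No real obstacle is expected; the lemma is essentially a bookkeeping statement whose force lies in its later application as a Prolog constraint.
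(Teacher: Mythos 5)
Your proof is correct; the paper in fact leaves this lemma as ``an easy check left to the reader,'' and your direct verification is exactly the intended bookkeeping. The two key points are precisely the ones you isolate: $d(z)=2$ forces the star of $z$ to be the two blocks witnessing its adjacency to $x$ and $y$ (so $z$ cannot meet any other element of $I$), and the $I$-toe property confines $w$ to a single block of $\B_I$, necessarily an $x$-block, so $w$ cannot meet $I\setminus\{x\}$.
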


\begin{lemma}\label{adjacency}
    Let $H=(X,\B)$ be a hypergraph and $I\subseteq X$ an independent set of maximal order. Let $x,y\in I$ with $d(x)=2$, $d(y)=3$ and $z$ another vertex of degree $2$ adjacent to $x$ and $y$. Suppose there exists a toe $w$ adjacent to $x$ but not adjacent to $z$. Let $v$ be a toe adjacent to $y$ but not adjacent to $z$. Then $w$ and $v$ are adjacent.
\end{lemma}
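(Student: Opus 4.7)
The plan is to argue by contradiction, mirroring the construction from \cref{reduce3to2} but inflating the replacement set to yield a strictly larger independent set. Specifically, assume $w$ and $v$ are not adjacent, and consider
\[ I'' := (I \setminus \{x,y\}) \cup \{w,v,z\}.\]
I will show that $I''$ is an independent set of size $|I|+1$, contradicting the maximality of $|I|$.

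First I would verify that $w$, $v$, $z$ are pairwise distinct and disjoint from $I \setminus \{x,y\}$. Since $w$, $v$, $z$ are each adjacent to some element of $I$, none of them lies in $I$. To see $w \neq v$: as toes, $w$ and $v$ each lie in exactly one block of $\B_I$. If $w=v$, that common block would contain both $x$ and $y$, violating independence of $I$. The relations $w \neq z$ and $v \neq z$ follow from the hypotheses that $w$ and $v$ are non-adjacent to $z$.

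Next I would check pairwise non-adjacency in $I''$. Pairs inside $I \setminus \{x,y\}$ are non-adjacent because $I$ itself is independent. The vertex $w$ is a toe adjacent to $x$, so the unique block of $\B_I$ containing $w$ is an $x$-block, and by independence of $I$ this block meets $I$ only in $\{x\}$; hence $w$ is non-adjacent to every element of $I \setminus \{x\}$, and in particular to $I \setminus \{x,y\}$. The same reasoning handles $v$. For $z$: the two blocks of the degree-$2$ vertex $z$ are forced to be an $x$-block and a $y$-block (distinct, again by independence of $I$), so every neighbour of $z$ lies in one of these blocks, and no such neighbour can be in $I \setminus \{x,y\}$. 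Finally, the pairs $\{w,z\}$ and $\{v,z\}$ are non-adjacent by hypothesis, and $\{w,v\}$ is non-adjacent by our contradiction assumption.

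Putting these together, $I''$ is independent with $|I''| = |I| - 2 + 3 = |I|+1$, contradicting the maximality of $I$. The only non-routine step is the distinctness check $w \neq v$, which must crucially use both the definition of a toe and the independence of $I$; everything else is bookkeeping over the block memberships forced by the given degrees.
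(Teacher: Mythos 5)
Your proposal is correct and is essentially the paper's own proof: the paper also assumes $w$ and $v$ are non-adjacent and observes that $(I\setminus\{x,y\})\cup\{v,w,z\}$ is an independent set of larger order, contradicting maximality. You have simply filled in the distinctness and non-adjacency checks that the paper leaves implicit.
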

\begin{proof} Suppose $w$ and $v$ are not adjacent. Then $I'=I\setminus\{x,y\}\cup\{v,w,z\}$ is independent of larger order than $I$. 
\end{proof}

In the following let $\tau_r=|\tau^{-1}(r)|$ be the number of vertices $x$ in $I$ such that $|F_x|=r$. Say an independent set $I$ is \emph{$2$-max} if it is of maximal order and contains a maximal order subset of independent vertices of degree $2$.

% , its vertices have degree at most $3$ and there is no other such independent set with strictly more vertices of degree $2$. If $I$ is $2$-max, then note that all vertices of degree $2$ are either in $I$ or adjacent to the $\delta_2$vertices in $I$ of degree $2$.

\begin{lemma}\label{d3excess}
    Let $H=(X,\B)$ be a $(n,6,6,2;j)$-lottery design containing $r$ isolated blocks and $s$ Shannon subhypergraphs. Let $I$ be a $2$-max independent set whose vertices have degree at most $3$, containing $\delta_2$ vertices of degree $2$. Then 
    \[6j-3n+s+8\delta_2+12r\geq 7\tau_{13} +11\tau_{14}+12\tau_{15}.\]
\end{lemma}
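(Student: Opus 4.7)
The plan is to bound the excess $\E(X)$ from above and below by examining how $I$ sits inside the design. First, \cref{excessfromisolated} gives $\E(X) = 6j + 6r - 2n$, so the target inequality is equivalent to
\[\E(X) + s + 8\delta_2 + 6r - n \geq 7\tau_{13} + 11\tau_{14} + 12\tau_{15}.\]
Since $I$ is maximal, $\bigcup \B_I = X$ by \cref{indsetrems}(ii), and I partition $X = I \sqcup F_I \sqcup Y$, where $Y$ denotes the non-toe vertices outside $I$; each $y \in Y$ lies in at least two $I$-blocks and so has $d(y) \geq 2$. By segregation, each isolated block shares exactly one vertex with $I$, so $\delta_1 = r$ and $n = 6r + \delta_2 + \delta_3 + |F_I| + |Y|$, while $\E(X) = \delta_3 + \E(F_I) + \E(Y)$, the $5r$ remaining degree-$1$ vertices contributing $0$.

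The core step is to exploit the $2$-max hypothesis to prove that for $x \in I$ with $d(x) = 3$, the set $F_x$ contains no degree-$2$ vertex. If such a toe $y$ existed, then $y$ lies in exactly one $I$-block (the $x$-block) and one other block, and the latter contains no vertex of $I$; hence $(I \setminus \{x\}) \cup \{y\}$ is an independent set of the same cardinality as $I$ but with one more degree-$2$ vertex, contradicting $2$-maximality. Combined with \cref{toetable}, this gives $\E(F_x) \geq |F_x| + c_{|F_x|}$ for $d(x)=3$, where $c_{13} = 7$, $c_{14} = 11$, $c_{15} = 12$, and $c_r = 0$ otherwise. For $x \in I$ of degree $2$ I will use the weaker bound $\E(F_x) \geq |F_x| - |F_x^{(2)}|$, where $F_x^{(2)}$ is the set of degree-$2$ toes adjacent to $x$. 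Summing over $x \in I$ and noting that every degree-$2$ toe arises from some $x \in I$ with $d(x) = 2$,
\[\E(F_I) \geq |F_I| - |T_2(F_I)| + 7\tau_{13} + 11\tau_{14} + 12\tau_{15},\]
where $T_2(F_I)$ denotes the set of all degree-$2$ toes.

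Substituting this together with $n = 6r + \delta_2 + \delta_3 + |F_I| + |Y|$ into the target inequality, everything cancels down to
\[|T_2(F_I)| + (|Y| - \E(Y)) \leq 7\delta_2 + s.\]
Now $|Y| - \E(Y) = \sum_{y \in Y}(3 - d(y)) \leq |Y_{d=2}|$ since $d(y) \geq 2$ throughout $Y$. As every degree-$2$ vertex of $X$ lies in exactly one of $I$, $T_2(F_I)$, or $Y$, we have $|T_2(F_I)| + |Y_{d=2}| = d_2 - \delta_2$, and the required bound follows from \cref{shanrearrangement} applied with $k = 6$, $m = 3$, $\rho = 0$, which yields precisely $d_2 \leq 8\delta_2 + s$. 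The main obstacle is recognising that $2$-maximality is what upgrades the excess bound for degree-$3$ vertices of $I$ so as to deliver the coefficients $7$, $11$, $12$ on the right-hand side; once this is seen the remaining calculation is bookkeeping.
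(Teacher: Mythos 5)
Your proof is correct, and its skeleton matches the paper's: both arguments reduce the claim to a lower bound on the portion of the excess contributed by degree-$\geq 4$ vertices in the feet $F_x$, invoke \cref{table:exs} for $|F_x|\in\{13,14,15\}$, and close with $d_2\leq 8\delta_2+s$ from \cref{shanrearrangement}. The genuine difference is how you certify that the relevant toes have degree at least $3$. The paper needs this only for $x$ with $|F_x|\geq 13$ and obtains it combinatorially: such a toe is opposite at least $8>5$ toes in the other $x$-blocks, hence lies in at least two webbings; this uses only maximality of $|I|$, not $2$-maxness. You instead use $2$-maxness to show that \emph{no} degree-$3$ vertex of $I$ has a degree-$2$ toe (swapping $x$ for such a toe would produce $\delta_2+1$ independent degree-$2$ vertices), a stronger statement which your explicit partition $X=I\sqcup F_I\sqcup Y$ (plus the isolated vertices) genuinely needs in order to absorb the $|F_I|$ term for every degree-$3$ vertex, including those with $|F_x|\leq 12$. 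The paper's bookkeeping via $\mathfrak F(X)=\E(X)-n+6r+d_2=6j+12r-3n+d_2$ sidesteps this, since $\mathfrak F(F_x)\geq 0$ trivially when $|F_x|\leq 12$. So your route leans harder on the $2$-max hypothesis but replaces the webbing count by a one-line exchange argument; both are valid, and your reduction to $|T_2(F_I)|+|Y|-\E(Y)\leq 7\delta_2+s$ checks out. (Like the paper, you implicitly assume segregation --- $d_1=6r$ and every degree-$1$ vertex in an isolated block --- which the lemma inherits from the surrounding reductions.)
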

\begin{proof} For $Y\subseteq X$, let $\mathfrak{F}(Y)=\sum_{i\geq 4}(i-3)|d^{-1}(i)\cap Y|$. In particular,
    \[\mathfrak{F}(X)=\sum_{i\geq 4}(i-3)d_i=\E(X)-n+rk+d_2=6j+12r-3n+d_2,\] 
    where we use \cref{excessfromisolated}.  Let $x\in I$ with $\tau(x)=13$. Then all $x$-toes have degree at least $3$: one observes that each toe $t\in F_x$ is opposite at least $8\geq 6$ others; thus there must be at least two $x$-webbings containing. From \cref{toetable} we have $\E(F_x)\geq 20$. Thus \[\mathfrak{F}(F_x)\geq \E(F_x)-13\geq 20-13=7.\] Similar arguments for $\tau(x)=14$ or $15$ yield \[\mathfrak F(X)\geq 7\tau_{13} +11\tau_{14}+12\tau_{15}.\]
    Since $I$ is $2$-max, \cref{shanrearrangement} gives $d_2\leq 8\delta_2+s$ and we are done.
\end{proof}

\begin{lemma}\label{changingsocks}
    Let $H=(X,\B)$ be a $(n,6,p,2;j)$-lottery design containing $r$ isolated blocks and $s$ Shannon subhypergraphs. Let $I$ be an independent set of order $p-1$ and take $x\in I$ with $d(x)=3$, $\tau(x)=12$ and $\E(F_x)\leq 14$. Then at least one of the following holds:
    \begin{enumerate}
        \item there exists $y\in X$ with $d(y)=3$ such that $I'=I\setminus\{x\}\cup\{y\}$ is independent with $\tau(y)\leq 11$;
        \item there exists an $(n-14,6,p-2,2;j-7)$-lottery design.
    \end{enumerate}
\end{lemma}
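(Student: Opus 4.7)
The plan is to prove the contrapositive: assuming (1) fails, derive (2). Under the failure of (1), every degree-3 toe $y$ with $I' := I\setminus\{x\}\cup\{y\}$ independent must satisfy $\tau_{I'}(y) \geq 12$.

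First I establish the structure of $F_x$. Since $|I|=p-1$, \cref{indsetrems}(i) forbids extending $I$ by simultaneously replacing $x$ with two non-adjacent toes from different $x$-blocks; hence every such cross-block pair of toes is adjacent and must co-occur in a webbing. Each toe in an $x$-block is opposite at least $12-5 = 7$ toes in the other two $x$-blocks, and since a webbing supplies at most 5 other vertices, every toe lies in at least $\lceil 7/5\rceil = 2$ webbings, giving $d(y)\geq 3$. The hypothesis $\E(F_x)\leq 14$ then limits the excess: at least 10 toes have degree exactly 3, at most 2 of higher degree.

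Next I use the failure of (1) to pin down the global structure around $x$. For any degree-3 toe $y$ lying in $x$-block $B_i$ and in webbings $W_{j_1}, W_{j_2}$, the condition $\tau_{I'}(y)\geq 12$ says that at least 12 of the at most $16$ vertices of $B_i\cup W_{j_1}\cup W_{j_2}$ are $I'$-toes. A careful count of which vertices can fail to be $I'$-toes---those lying in multiple blocks of $\B_y$, or those adjacent to $I\setminus\{x\}$ outside $\B_y$---yields the bound $(|B_i\cap W_{j_1}|-1)+(|B_i\cap W_{j_2}|-1)+(|W_{j_1}\cap W_{j_2}|-1)\leq 3$. Iterating over all degree-3 toes, I force exactly four webbings, each consisting of six $x$-toes, the three $x$-blocks sharing a single common non-toe vertex $z$, and all 12 toes of degree exactly 3. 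Setting $Y := \{x,z\}\cup F_x$ gives $|Y|=14$, and $\B_x\cup\W$ contributes exactly 7 blocks lying entirely within $Y$.

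Finally, I extract the sub-design. Set $H' := (X\setminus Y,\, \B\setminus(\B_x\cup\W))$; by the tightness above, every vertex of $Y$ belongs only to blocks of $\B_x\cup\W$, so the blocks of $H'$ are vertex-disjoint from $Y$ and we have $|X\setminus Y|=n-14$, $|\B\setminus(\B_x\cup\W)|=j-7$. For any draw $D'\subseteq X\setminus Y$ of size $p-2$, extend to $D := D'\cup\{v_1,v_2\}$ for suitable $v_1,v_2\in Y$; the lottery property of $H$ yields $B\in\B$ with $|B\cap D|\geq 2$. If $v_1, v_2$ are chosen so that no block of $\B_x\cup\W$ contains both, then $B\notin\B_x\cup\W$, and since $v_1,v_2\notin\bigcup(\B\setminus(\B_x\cup\W))$ we get $|B\cap D'|\geq 2$ as needed.

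The main obstacle is precisely this choice of $v_1,v_2$: the tight structure makes $(Y,\B_x\cup\W)$ a $(14,6,2)$-covering design meeting the bound $C(14,6,2)=7$, so every pair in $Y$ lies in some removed block, and no naive $v_1,v_2$ will work. Resolving this requires a subtler argument---either exploiting the multiplicities with which pairs of $Y$ are covered (so that one removed block matching $\{v_1,v_2\}$ does not preclude a simultaneous match in $\B\setminus(\B_x\cup\W)$), or modifying $H$ by block-swaps in the spirit of \cref{reduce3to2} to realise the sub-design after a local rearrangement rather than by direct deletion.
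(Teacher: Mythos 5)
Your structural analysis of $F_x$ agrees with the paper's: both arguments, assuming (1) fails, force (up to isomorphism) three $x$-blocks each carrying four toes, four webbings realising all cross-block adjacencies, every toe of degree at least $3$, and ultimately a $14$-vertex set $Y=\{x,z\}\cup F_x$ carrying seven blocks that form a $(14,6,2,2;7)$-design (the doubled Fano configuration $\mathbf{(E)}$). The paper reaches this slightly differently---it picks a single toe of degree exactly $3$ (which exists because $\E(F_x)\le 14$), say $1$, applies the failure of (1) to $I'=I\setminus\{x\}\cup\{1\}$ to force $\tau(1)\ge 12$, and reads off $\C=\B_1\cup\mathcal{V}_1$ from the webbings of $1$---but the local picture is the same. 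Two small overreaches on your side: the excess budget does not quite force that \emph{all} twelve toes have degree exactly $3$, nor that every vertex of $Y$ lies only in blocks of $\C$; the paper allows stray incidences and disposes of them by replacing each occurrence of a $Y$-vertex in a block $B\in\B\setminus\C$ by a vertex of $X\setminus(Y\cup B)$.

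The genuine gap is exactly the one you flag at the end, and neither of the directions you sketch is what the paper does. The paper's device is to extend the draw by \emph{one} vertex rather than two: take a draw $D'$ from $X\setminus Y$ and append the single toe $1$. The resulting draw meets some block $B\in\B$ in at least two places; $B$ cannot lie in $\C$, since blocks of $\C$ are contained in $Y$ and so meet the draw only in $\{1\}$; and $1\notin B$ because the entire star $\B_1$ is contained in $\C$; hence $|B\cap D'|\ge 2$ and, after the cleaning step above, $B$ is a block of the residual design. This sidesteps the covering-design obstruction you identified, because one never needs a \emph{pair} of appended vertices avoiding all blocks of $\C$. Note, however, that appending one vertex means $|D'|=p-1$, so what this argument literally delivers is an $(n-14,6,p-1,2;j-7)$-lottery design rather than the $(n-14,6,p-2,2;j-7)$ claimed in the statement; your difficulty in reaching the $p-2$ parameters is therefore not a defect of your argument alone---the paper's own proof, as written, also only attains $p-1$. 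In short: your reduction to the local structure is sound and matches the source, but the proposal is incomplete without the one-vertex extension trick (or some repair of the parameter discrepancy).
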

\begin{proof}
    Assume (1) does not hold.

    It can be shown that the hypotheses imply, up to isomorphism, that 
    \[\B_{x}=\{\{x,1,2,3,4,\_\},\{x,5,6,7,8,\_\},\{x,9,10,11,12,\_\}\},\text{ and}\]
    \[\W_x\supseteq \{\{1,2,5,6,9,10\},\{1,2,7,8,11,12\},\{3,4,5,6,11,12\},\{3,4,7,8,9,10\}\},\]
where $F_x=\{1,\dots,12\}$. Since $\E(F_x)\leq 14$, at least one of the toes in $F_x$ must have degree $3$; say, with label $1$. 
% ; set this to be $y$. 
Now
$\B_1=\{\{1,x,2,3,4,z\},\{1,2,5,6,9,10\},\{1,2,7,8,11,12\}\}.$ Since (1) does not hold, it follows that $z$ is a toe. Moreover,
\[\mathcal{W}_1\supseteq \mathcal{V}_1:=\{\{x,z,5,6,7,8\},\{x,z,9,10,11,12\},\{3,4,5,6,11,12\},\{3,4,7,8,9,10\}\}.\]
Hence the vertices $Y:=\{x,z,1,2\dots,12\}$ induce a subhypergraph of $H$ containing a $(14,6,2,2;7)$-lottery design with blocks $\C=\B_1\cup\mathcal{V}_1$. For each block $B\in\B\setminus\C$, replace any occurrence of a vertex in $Y$ with a vertex in $X\setminus (Y\cup B)$. For any draw of size $p-1$ from $X\setminus Y$, create one of size $p$ by appending the vertex $1$ to it. This must intersect a block of $\B\setminus\C$ in at least $2$ vertices and hence gives the lottery design as specified in (2).
\end{proof}

\section{Proof of the main theorem}
We now indicate how the constraints established in the previous section are transcribed into Prolog code for the constraint solver to establish the theorem on our behalf. In summary, the CP solver works upwards through values of $n$, first finding the best upper bound on the size of $\B=j$ arising from a disjoint union of five covering designs; often the solver discovers that $j$ is the same as the previous number $L(n-1,6,6,2)$ and from \cref{doesntgodown}, we know this must be optimal. Otherwise the CP solver assumes the existence of a lottery design with $|\B|=j-1$ and looks to bind variables to values describing viable configurations for the blocks connected to a maximal independent set $I$; in order words.~the $I$ blocks. By configuration, we mean: 
\begin{enumerate}\item the degrees of the vertices in the independent set---in other words, the number of $I$-blocks incident with each vertex; these values must obey the constraints established above, for example \cref{ninthdeg2}.
    \item the distribution of toes among the $I$-blocks, which must obey the constraints established in \cref{sec:toes}.
\end{enumerate}
In each case, the solver returns {\small\sc unsat} and the theorem is proved. After we give a detailed account of the Prolog code in the next section, we give an illustrative example in case $n=47$,

\subsection{SICStus Prolog code}
The code is available to download from \url{github.com/cushydom88/lottery-problem}, which implements the constraints described in the previous section. We give a description of the strategy and its functionality. 

The user first loads SICStus Prolog and consults the file \lstinline{lottery.pl} through the command 
\begin{lstlisting}
    ?- ['$PATH_TO_DIRECTORY/lottery.pl'].
\end{lstlisting} 
Then one queries Prolog at the command line by asking it to solve (for any unbound variables) in a conjunction of predicates. The following is an example of the output produced from the main predicate in our code:
\begin{figure}[ht]\begin{lstlisting}
    | ?- lottery_numbers_in_range( 69, 71).
%  L(69,6,6,2) = 35
%  L(70,6,6,2) = 35
% Conjecture L(71,6,6,2) = 38. 
% Bad RSTuples [R,S,A,B]: 
% [0,0,0,32]
% [0,1,9,33]
% [0,2,18,34]
% [1,0,0,24]
% Delta(I) exceptions [R,S,D2L,D2U,Delta]:
% [0,0,0,0,[3,3,3,3,3]]
\end{lstlisting}\caption{Example output}\label{outputexample}\end{figure} 

The principal predicates are:

\lstinline{lottery_numbers_in_range( NMin, NMax)}. This predicate writes output to the terminal of the form given in \cref{outputexample}; i.e.~it works sequentially with $n$ from the value of \lstinline{NMin} to the value of \lstinline{NMax} either outputting a line stating the value of $L(n,6,6,2)$ or a conjectured value of it, which is correct modulo a list of exceptional cases which could perhaps be checked by hand. The conjectured value for \lstinline{NMin} is first computed from scratch, using a lower bound of $1$; for efficiency, from that point onwards, it then passes the (in some cases, conjectured) value of $L(n,6,6,2)$ as a lower bound for $L(n+1,6,6,2)$, invoking \cref{doesntgodown}. 

\lstinline{upper_bound( N, Guess, UB )} is called by the previous predicate, and is checked recursively. It holds when \lstinline{N}~$=n$ and the integer \lstinline{Guess} can be achieved as a sum of at most $p-1=5$ values $C(a_i,6,2)$ in \cref{coveringnumbers}. Then \lstinline{UB} is bound to \lstinline{Guess}. Otherwise it is declared that \lstinline{upper_bound( N, Guess+1, UB )} should hold.

This predicate is first asserted with \lstinline{Guess} as the (conjectured) value $j_{n-1}$ of $L(n-1,6,6,2)$. In many cases, it turns out that $j_{n-1}$ can be achieved as a sum of at most $5$ values $C(a_i,6,2)$ and so \lstinline{UB} is bound to $j_{n-1}$. In that case, by \cref{doesntgodown}, we conclude immediately $L(n,6,6,2)=L(n-1,6,6,2)$---modulo any previous cases that remain to be checked by hand.

Otherwise, \lstinline{UB} $>j_{n-1}$ and we seek to rule out $L(n,6,6,2)=$ \lstinline{UB}$-1$. Assume therefore, in search of a contradiction, that there is a lottery design $H=(X,\B)$ with $|\B|=$~\lstinline{UB}$-1$. Then further predicates are engaged which either generate the sought contradiction, or progressively collect a list of information about possible cases that cannot be ruled out.

\lst{bound_isolated_blocks_and_num_shans( N, UB, Rs, RSPairs )} takes the value of \lstinline{UB} from the above predicate and binds \lstinline{RSPairs} to a list of plausible pairs $(r,s)$ where $r=d_1/6$ is the number of isolated blocks in $H$, and $s$ is the number of Shannon subhypergraphs in $H$---a pair will be determined as plausible by the following: Assume $H$ is a $(n,6,6,2;\lstinline{UB}-1)$-lottery design with $(r,s)$ its number of isolated blocks and Shannon subhypergraphs. Then $r+s\leq 5$ (or else there exists $I$ of order $6$); $r$ must satisfy the inequality in \cref{rbound}; and $(r,s)$ must satisfy the inequality in \cref{rboundforfuredi}, implying there exists a $(n-6r-6s,6,6-s-r,2;\lstinline{UB}-1-r-3s)$-lottery design which in turn constrains $r$ and $s$ by appeal to \cref{FurediBound}.

\lstinline{get_deltaI_exceptions( N, UB, MinNumIBlocks, [R,S], DeltaExceptions ) }. Here we assume that the bound labelled (*) in \cref{1s2s3s} holds with $\pi=5$ and consider independent sets $I$ satisfying its conclusion. We find all possibilities for $\delta(I)$ satisfying a long list of constraints. More specifically, for any given pair \lstinline{[R,S]} in \lstinline{RSPairs}, this predicate binds \lstinline{DeltaExceptions} to a list whose entries are tuples \[\lstinline{DeltaException=[R,S,D2L,D2U,Delta]}\] where \lstinline{Delta} represents a tuple $\delta(I)$ satisfying our system of  constraints. It is accompanied by the extra data \lstinline{D2L} and \lstinline{D2U}, which are lower and upper bounds for the value of $d_2$---these arise as by-products of the calculations we describe below. \lstinline{Delta} always takes the following form:
\begin{enumerate}\item the first $r=\lstinline{R}$ elements are $1$;
    \item the next $s=\lstinline{S}$ elements are $2$;
\item the next $q$ elements are each $2$, where $q$ is the minimum value implied by \cref{ninthdeg2};
\item the remaining $5-r-s-q$ elements of \lstinline{Delta} are either $2$ or $3$;
\item the sum of \lstinline{Delta}, equal to $|\B_I|$, satisfies the bound in \cref{minnumiblocks} with $\pi=5$;
\item letting $J$ be the last $5-r-s$ elements of $I$, and \lstinline{DeltaTail} the corresponding sublist $\delta(J)\subseteq\delta(I)$, then \lstinline{DeltaTail} satisfies the predicate \lstinline{can_populate_toes_in_Iblocks} as we describe next.
% \[\lstinline{can_populate_toes_in_Iblocks(N, UB, R, S, DeltaTail, [DeltaTail, D2L, D2U])}.\]
\end{enumerate}

\lstinline{can_populate_toes_in_Iblocks(N, UB, R, S, DeltaTail, [DeltaTail, D2L, D2U])}. This  binds a variable \lst{Excess} to the value $\E(X)=6(\lstinline{UB}-1+\lstinline{R})-2\lstinline{N}$ (see \cref{excessfromisolated}), and a variable \lst{MinToes} to the lower bound on $|F_{J}|$ supplied by \cref{mintoes}. Suppose there are $\delta_3$ vertices in $I$ of degree $3$. Each contributes $1$ to $\E(X)$, and since these vertices are not toes by definition, the contribution to the excess $\E(X)$ from toes must be at most $\E(X)-\delta_3$. Hence a variable \lst{FootExcess} is created and bound to \lst{Excess-NumThrees}. 

\lstinline{populate_toes_in_Iblocks( DeltaTail, MinToes, Excess, Vs )} binds \lstinline{Vs} to a solution of a  constraint problem to determine $\tau(J)$; in other words, to determine viable distributions of toes among the $J$-blocks. (If it fails, the case (\lstinline{Delta,R,S}) will not feature in the list \lstinline{DeltaExceptions}.) The constraint problem is as follows: \begin{enumerate}%\item $J=\{x_{r+s+1},\dots,x_{p-1}\}\subseteq I$ is the subset of $I$ of vertices of degree at least $2$, such that $\delta(J)$ is bound to \lst{DeltaNoOnes};
\item \lstinline{Vs} is a sequence of variables taking values $(\tau(x_{1}),\dots,\tau(x_{5-r-s}))$ where $J=\{x_1,\dots,x_{5-r-s}\}$ and the $i$th entry in \lstinline{DeltaTail} is $d(x_i)$; 
\item the total number of toes $\sum\tau(J)$ represented by \lstinline{sum(Vs)} is at least the calculated lower bound \lstinline{MinToes};
\item the sum over $1 \leq i\leq 5-r-s$ of the minimum values for  $\E(F_{x_i})$ implied by \cref{toetable} is at most \lst{FootExcess}.
\end{enumerate}

The solutions \lstinline{Vs} are then compelled to satisfy \lstinline{d3_excess_check}, which implements the constraint \cref{d3excess}; and \lstinline{twos_lie_with_twos}, which implements the constraints from \cref{reduce3to2} and \cref{adjacency}. Finally \lstinline{changing_socks} implements \cref{changingsocks} repeatedly.

\lstinline{get_bad_RS_tuples(N, UB, RSPairs, BadRSTuples1 )}. Here we suppose the inequality (*) in \cref{1s2s3s} does not hold with $\pi=5$. 
% Then we cannot conclude from that proposition that $I_0$ can be extended to an independent set of size $5$ whose elements are all of order $1,2,3$. %Say a maximal independent set is \emph{fat} if it is of order at most $4$ or contains at least one vertex of degree at least $4$. 

This predicate binds \lstinline{BadRSTuples1} to a list of tuples \lstinline{[R,S,A,B,C]} where:

\begin{enumerate}\item \lstinline{[R,S]} is a member of \lstinline{RSPairs} representing the pair $(r,s)$ such that the bound (*) in \cref{1s2s3s} fails, with $j=\lstinline{UB}-1$;

%\item \lstinline{A} is the minimum number of vertices of degree $2$ in an independent set $I$ as guaranteed by \cref{ninthdeg2};

\item \lstinline{A} is a lower bound for $d_2$ implied by  \cref{lowerboundd2}; and 

\item \lstinline{B} is an upper bound for $d_2$ achieved by setting $\delta_2=4-r$ in \cref{shanrearrangement}.\end{enumerate}

Take a tuple \lstinline{[R,S,A,B]} in  \lstinline{BadRSTuples1}, and assume it satisfies the bound in \cref{1s2s3s} with $\pi=4$. Then it is possible to find an independent set $I$ of size $4$ with \lstinline{R} elements of degree $1$ and at least \lstinline{S} of degree $2$ and the rest of degree $2$ or $3$. For every value of $d_2$ lying between $B$ and $C$ a list of possible values of $d_3$ are calculated. For each such pair $(d_2,d_3)$, we then solve a constraint problem to find solutions for $\delta(I)$---or $\delta_2$, which amounts to the same thing. More specifically, we let \lstinline{DeltaTail} represent the last $4-r-s$ entries of $\delta(I)$ with $\delta_2-s$ of degree $2$ and $\delta_3$ of degree $3$, corresponding to the subset $J\subseteq I$.  We have $|\BB_{J}\setminus J|=2(\delta_2-s)+3\delta_3$. These must contain at least one each of the $d_2-9s$ vertices of degree $2$ not in Shannon subhypergraphs and each of the $d_3$ vertices of degree $3$. Hence we may bound below the size of the $I$-foot by
\begin{align*}|F_{I}| \geq 2(d_2 -9s &+ d_3 - 4 + r + s) - 10(\delta_2-s) - 15\delta_3\\ &= 2d_2 - 6s + 2d_3 - 8 + 2r - 10\delta_2 - 15\delta_3.\end{align*}
Now $\E(F_{I})\geq d_3-\delta_3$. Finally the predicate \lstinline{populate_toes_in_Iblocks} is invoked. If there are no solutions for any pair $(d_2,d_3)$, then the tuple is excluded from  \lstinline{BadRSTuples1} to form  \lstinline{BadRSTuples}.

The list \lstinline{BadRSTuples} is displayed on the user output stream. (In case $n\leq 70$ this list is empty.)

\subsection{Example: \texorpdfstring{$n=47$}{n=47}}
Suppose $n=47$ and the previous value $L(46,6,6,2)=16$. Since a disjoint covering design configuration exists with $17$ blocks, we must rule out the possibility of a $(47,6,6,2;16)$-lottery design. So the variables \lstinline{N} and \lstinline{UB} are bound to $47$ and $17$, respectively, Now \lstinline{bound_isolated_blocks_and_num_shans( N,UB,Rs,RSPairs )} binds \lstinline{RSPairs} to the list \lstinline{[[0,0], [0,1], [0,2], [0,3], [0,4], [1,0], [1,1], [2,0]]} representing possible pairs $(r,s)$ where $r$ is the number of isolated blocks and $s$ is the number of $k$-Shannon subhypergraphs. Fix one such pair, and assume it is possible to take one vertex from each of the asssociate blocks and complete to an independent set of size $5$. The set of ways of doing this is computed by \lstinline{get_deltaI_exceptions} and the possibilities bound to \lstinline{DeltaExceptions}. Assume $(r,s)=(1,1)$, for example. Take $r=1$ vertex from the  isolated blocks, and $s=1$ vertex from the $k$-Shannon subhypergraphs, say $x_1$ and $x_2$; these have degrees $1$ and $2$ respectively. Now \lstinline{get_deltaI_exceptions} uses \cref{ninthdeg2} to establish that there must exist an independent set $I$ extending $\{x_1,x_2\}$, with $\delta(I)=(1,2,2,2,2)$. We now want to see if this can be consistent with the data in \cref{table:exs}. \cref{excessfromisolated} implies $\E(X)=8$, and \cref{mintoes} tells us that there must be at least $28$ $I$-toes. So \lst{populate_toes_in_Iblocks( DeltaTail,MinToes,Excess,Vs ) } is run, with \lstinline{DeltaTail=[2,2,2]}, \lstinline{MinToes=28}, \lstinline{Excess=8}, but this is found to be infeasible by the solver. (The other cases in \lstinline{RSPairs} are even easier to dismiss.) Finally, \lstinline{get_bad_RS_tuples} confirms that \cref{1s2s3s} does always hold with $\pi=5$ for the entries in \lst{RSPairs} and so the proof in case $n=47$ is complete.

\section{Configurations for minimal lottery designs}\label{sec:configs}
In all cases for $30\leq n\leq 70$, each minimal $(n,6,6,2)$ lottery design is achieved through a disjoint union of covering designs appearing in \cref{theconfigs}. (There may be several ways to do this and the table gives just one.) For example, when $n=54$, the configuration $(A,A,E,E,E)$ gives rise to $23$ tickets by the disjoint union of two diagrams of type $\mathbf{(A)}$ and three of type $\mathbf{(E)}$ from \cref{tickpics}. 

Let us be more explicit. Recall the \emph{Fano plane}, or projective plane of order $2$---perhaps the most well-known finite geometry;  this is depicted as $\mathbf{(E)}$ in \cref{theconfigs}. It contains $7$ `lines' (one being represented by a circle) that satisfy the property that any two points lie in exactly one line, and two lines intersect in exactly one point. Since there are $3$ points on each line, we see in particular that the Fano plane is a $(7,3,2)$-covering design; and we have made it into a $(14,6,2)$-covering design by having each point represent a pair of vertices. A set of blocks (tickets) may be read off from diagram $\mathbf{(E)}$ by concatenating the labels on the points in each of the $7$ lines. So each diagram $\mathbf{(E)}$ contributes $7$ blocks to the minimal design and the $\mathbf{(E)}$ diagram accounts for $14$ distinct vertices (or balls). On the other hand, each $\mathbf{(A)}$ diagram contributs just one ticket, containing $6$ vertices; note that $54=6+6+14+14+14$ and $23=1+1+7+7+7$. Now, to recover a complete set of tickets for any given value of $n$, one may simply write the numbers $1,\dots,n$ in any order above a blank set of the $5$ appropriate diagrams. We do this for $n=59$ in \cref{27tix} below. 

\begin{figure}
\parbox{0.4\textwidth}{\begin{tikzpicture}[scale=0.85,
  mydot/.style={
     draw,
     circle,
     fill=black,
     inner sep=3pt}
]
 \draw
 (0,0) coordinate (A) --
 (5,0) coordinate (B);

\node[mydot,label=above: {1,2,3}] at (A) {};
\node[mydot,label=above: {4,5,6}] at (B) {};

\node[label=below: {(1,2,3,4,5,6)}] at (2.5,0) {};
\node[label=above: {\bf (A)}] at (0,2) {};
\end{tikzpicture}}\hfill
\parbox{0.47\textwidth}{
\begin{tikzpicture}[scale=0.85,
  mydot/.style={
     draw,
     circle,
     fill=black,
     inner sep=3pt}
]
 \draw
 (0,0) coordinate (A) --
 (5,0) coordinate (B) --
 ($ (A)!.5!(B) ! {sin(60)*2} ! 90:(B) $) coordinate (C) -- cycle;
\coordinate (O) at
   (barycentric cs:A=1,B=1,C=1);

\node[mydot,label=left: {5,6}] at (A) {};
\node[mydot,label=right: {7,8}] at (B) {};
\node[mydot,label=left: {3,4}] at (C) {};
\node[mydot,label=below: {1,2}, label=above: {$\{\varnothing\}$}] at (O) {};
\node at (0,4.5) {\bf (B)};
\node at (5,3.5) { \begin{tabular}{ l  }
    (1,2,3,4,5,6)\\
    (1,2,3,4,7,8)\\
    (1,2,5,6,7,8)
   \end{tabular} };
\end{tikzpicture}}\\
\ \vspace{10mm}\ 

\parbox{0.45\textwidth}{
\begin{tikzpicture}[scale=0.85,
  mydot/.style={
     draw,
     circle,
     fill=black,
     inner sep=3pt}
]
 \draw
 (0,0) coordinate (A) --
 (5,0) coordinate (B) --
 ($ (A)!.5!(B) ! {sin(60)*2} ! 90:(B) $) coordinate (C) -- cycle;

\node[mydot,label=left: {4,5,6}] at (A) {};
\node[mydot,label=right: {7,8,9}] at (B) {};
\node[mydot,label=left: {1,2,3}] at (C) {};
\node at (5,3.5) {\begin{tabular}{ l  }
    (1,2,3,4,5,6)\\
    (1,2,3,7,8,9)\\
    (4,5,6,7,8,9)
   \end{tabular}};
\node at (0,4.5) {\bf (C)};
\end{tikzpicture}}\hfill\parbox{0.51\textwidth}{

\begin{tikzpicture}[scale=0.85,
  mydot/.style={
     draw,
     circle,
     fill=black,
     inner sep=3pt}
]
    \node[shape=circle,draw=black,fill=black,label=left: {1,2}] (A) at (0,0) {};
    \node[shape=circle,draw=black,fill=black,label=above: {3,4}] (B) at (3*4/4,3*2/4) {};
    \node[shape=circle,draw=black,fill=black,label=above left: {5,6}] (C) at (3*4/4,0) {};
    \node[shape=circle,draw=black,fill=black,label=below: {7,8}] (D) at (3*4/4, 3*-2/4) {};
    \node[shape=circle,draw=black,fill=black,label=right: {9,10}] (E) at (3*8/4,0) {};
    \node at (0,2) {\bf (D)};
    \node at (5.8,-2.3) {\begin{tabular}{ l  }
        (1,2,3,4,9,10)\\
        (1,2,5,6,9,10)\\
        (1,2,7,8,9,10)\\
        (3,4,5,6,7,8)
       \end{tabular}};

    \path [-] (B) edge node {} (C);
    \path [-] (C) edge node {} (D);
    \path [-] (A) edge node {} (C);
    \path [-] (C) edge node {} (E);

    \path [-] (A) edge[bend left=20] node {} (B); 
    \path [-] (B) edge[bend left=20] node {} (E); 

        \path [-] (A) edge[bend right=20] node {} (D); 
    \path [-] (D) edge[bend right=20] node {} (E); 
\end{tikzpicture}}
\ \\\vspace{5mm}\ 

 \begin{center}
\begin{tikzpicture}[scale=0.85,
  mydot/.style={
     draw,
     circle,
     fill=black,
     inner sep=3pt}
]
 \draw
 (0,0) coordinate (A) --
 (5,0) coordinate (B) --
 ($ (A)!.5!(B) ! {sin(60)*2} ! 90:(B) $) coordinate (C) -- cycle;
\coordinate (O) at
   (barycentric cs:A=1,B=1,C=1);
\draw (O) circle [radius= 5*1.717/6];
\draw (C) -- ($ (A)!.5!(B) $) coordinate (LC); 
\draw (A) -- ($ (B)!.5!(C) $) coordinate (LA); 
\draw (B) -- ($ (C)!.5!(A) $) coordinate (LB);

\node[mydot,label=left: {9,10}] at (A) {};
\node[mydot,label=right: {13,14}] at (B) {};
\node[mydot,label=left: {1,2}] at (C) {};
\node[mydot,label=left: {7,8}] at (O) {};
\node[mydot,label=below: {11,12}] at (LC) {};
\node[mydot,label=right: {5,6}] at (LA) {};
\node[mydot,label=left: {3,4}] at (LB) {};
\node at (0,4.5) {\bf (E)};
\end{tikzpicture}
\raisebox{24mm}{
\begin{tabular}{ l  }
 (1,2,3,4,9,10)\\
 (1,2,5,6,13,14)\\
 (1,2,7,8,11,12)\\
 (3,4,5,6,11,12)\\
 (3,4,7,8,13,14)\\
 (5,6,7,8,9,10)\\
 (9,10,11,12,13,14)
\end{tabular}}\end{center}
\vspace{10pt}
\begin{center}
\begin{tikzpicture}[scale=0.85,
    mydot/.style={
       draw,
       circle,
       fill=black,
       inner sep=3pt}
  ]
      \node[shape=circle,draw=black,fill=black,label=left: {1,2,3}] (A) at (0,0) {};
      \node[shape=circle,draw=black,fill=black,label=above: {4,5,6}] (B) at (3*4/4,3*2/4) {};
      \node[shape=circle,draw=black,fill=black,label=below: {7,8,9}] (D) at (3*4/4, 3*-2/4) {};
      \node[shape=circle,draw=black,fill=black,label=right: {10,11,12}] (E) at (3*8/4,0) {};
      \node at (0,2) {\bf (F)};

      \draw (B) -- (D);
      \draw (A) -- (E);
      \draw (A) -- (B);
      \draw (A) -- (D);
      \draw (B) -- (E);
      \draw (D) -- (E);
    %   \path [-] (A) edge node {} (E);
  
    %   \path [-] (A) edge node {} (B); 
    %   \path [-] (B) edge node {} (E); 
  
    %       \path [-] (A) edge node {} (D); 
    %   \path [-] (D) edge node {} (E); 
  \end{tikzpicture}
  \raisebox{24mm}{\begin{tabular}{ l  }
    (1,2,3,4,5,6)\\
    (1,2,3,7,8,9)\\
    (1,2,3,10,11,12)\\
    (4,5,6,7,8,9)\\
    (4,5,6,10,11,12)\\
    (7,8,9,10,11,12)
   \end{tabular}}

\vspace{10pt}
\textbf{(G)} as in \textbf{(E)} but replacing each occurrence of $14$ with a number from $1$ to $13$ which leaves a valid ticket.
\end{center}
\caption{Ticket configurations}\label{tickpics}\end{figure}

\begin{thm}\cref{theconfigs} lists $j=L(n,6,6,2)$ together with configurations described in \cref{tickpics} which afford an $(n,6,6,2;j)$-lottery design.\label{alltogetherthm}\end{thm}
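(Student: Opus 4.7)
The plan is to establish the theorem in two stages: an upper bound obtained from the explicit constructions in \cref{tickpics}, and a matching lower bound certified by the constraint programme of the preceding section. By \cref{upperbound}, any disjoint union of $(a_i,6,2)$-covering designs with $\sum a_i=n$ yields an $(n,6,6,2;\sum_i C(a_i,6,2))$-lottery design. It therefore suffices to show that each diagram $\mathbf{(A)}$--$\mathbf{(G)}$ is a covering design with the stated parameters, and that for each $n$ with $30\leq n\leq 70$ the configuration listed in \cref{theconfigs} decomposes $n$ as such a disjoint union whose block counts sum to $j$.

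For the first step I would verify the covering property of each diagram by inspection of the block lists. Diagram $\mathbf{(A)}$ is the trivial $(6,6,2)$-covering; diagrams $\mathbf{(B)}$, $\mathbf{(C)}$, $\mathbf{(D)}$ and $\mathbf{(F)}$ give $(8,6,2)$-, $(9,6,2)$-, $(10,6,2)$- and $(12,6,2)$-covering designs, and one checks by direct enumeration that every pair of vertices lies in at least one listed block. Diagram $\mathbf{(E)}$ is the Fano plane with each point blown up to a pair, producing a $(14,6,2)$-covering of size $7$: every pair of vertices sharing a common `point' lies in the three blocks through that point, and every pair of vertices in distinct points lies in the unique Fano line joining those points. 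Diagram $\mathbf{(G)}$ is a pointed variant of $\mathbf{(E)}$ and is verified analogously, with the side condition on the replacement label being exactly what ensures the resulting configuration remains a valid covering.

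Second, I would traverse \cref{theconfigs} row by row and check the two arithmetic conditions: that the vertex counts of the five component diagrams sum to $n$, and that the block counts sum to the stated $j$. Each row is essentially a five-term bookkeeping check, after which \cref{upperbound} immediately produces the required $(n,6,6,2;j)$-lottery design and hence $L(n,6,6,2)\leq j$.

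Finally, the matching lower bound $L(n,6,6,2)\geq j$ is supplied by the earlier machinery. For those $n$ at which \cref{FurediBound} is tight, no further work is needed; for those $n$ at which the table entry $j$ equals the preceding entry for $n-1$, \cref{doesntgodown} transfers the lower bound from $n-1$; and for the remaining $n$ the CP predicate \texttt{lottery\_numbers\_in\_range} returns \textsc{unsat} when asked for a $(n,6,6,2;j-1)$-lottery design, as explained in the preceding section. The real obstacle here is not mathematical: the upper bounds are routine once the diagrams are drawn, and the lower bounds are delegated to the solver. The main task is accounting — checking that every row of \cref{theconfigs} is internally consistent and that the CP run covers exactly those $n$ for which neither \cref{FurediBound} nor monotonicity already suffices.
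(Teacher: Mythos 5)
Your proposal is correct and follows essentially the same route as the paper: the upper bound comes from verifying that each diagram in \cref{tickpics} is a covering design and applying \cref{upperbound} to the decompositions in \cref{theconfigs}, while the matching lower bound is delegated to \cref{FurediBound}, \cref{doesntgodown}, and the {\small\sc unsat} runs of the constraint solver described in the preceding section. The paper gives no separate proof environment for this theorem precisely because its proof is this two-part bookkeeping plus the machinery already established, which is exactly what you describe.
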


\begin{table}[ht]
    \begin{center}
\begin{tabular}{ |c|c|c| } 
  \hline
 $n$ & $L(n,6,6,2)$ & Configuration \\ 
 \hline
 32 & 7 & $(A,A,A,A,B)$ \\ 
 33 & 7 & $(A,A,A,A,C)$  \\ 
34 & 8 & $(A,A,A,A,D)$  \\
35 & 9 & $(A,A,A,B,C)$  \\
36 & 9 & $(A,A,A,C,C)$  \\
37 & 10 & $(A,A,A,C,D)$  \\
38 & 11 & $(A,A,A,A,E)$  \\
39 & 11 & $(A,A,C,C,C)$  \\
40 & 12 & $(A,A,C,C,D)$  \\
41 & 13 & $(A,A,A,C,E)$  \\
42 & 13 & $(A,C,C,C,C)$  \\
43 & 14 & $(A,C,C,C,D)$  \\
44 & 15 & $(A,A,C,C,E)$  \\
45 & 15 & $(C,C,C,C,C)$  \\
46 & 16 & $(C,C,C,C,D)$  \\
47 & 17 & $(A,C,C,C,E)$  \\
48 & 18 & $(A,C,C,D,E)$  \\
49 & 19 & $(A,A,C,E,E)$  \\
50 & 19 & $(A,A,A,A,E)$  \\
51 & 20 & $(A,A,A,D,E)$  \\
  \hline
\end{tabular}
\begin{tabular}{ |c|c|c| } 
  \hline
 $n$ & $L(n,6,6,2)$ & Configuration \\ 
 \hline
52 & 21 & $(A,C,C,E,E)$  \\
53 & 22 & $(A,C,D,E,E)$  \\
54 & 23 & $(A,A,E,E,E)$  \\
55 & 23 & $(C,C,C,E,E)$  \\
56 & 24 & $(C,C,D,E,E)$  \\
57 & 25 & $(A,C,E,E,E)$  \\
58 & 26 & $(A,D,E,E,E)$  \\
59 & 27 & $(B,C,E,E,E)$  \\
60 & 27 & $(C,C,E,E,E)$ \\
61 & 28 & $(C,D,E,E,E)$ \\
62 & 29 & $(D,D,E,E,E)$ \\
63 & 30 & $(C,E,E,E,F)$ \\
64 & 31 & $(D,E,E,E,F)$ \\
65 & 31 & $(C,E,E,E,E)$ \\
66 & 32 & $(D,E,E,E,E)$ \\
67 & 34 & $(E,E,E,F,G)$ \\
68 & 34 & $(E,E,E,E,F)$ \\
69 & 35 & $(E,E,E,E,G)$ \\
70 & 35 & $(E,E,E,E,E)$ \\
&&\\
  \hline
\end{tabular}
\caption{Lottery numbers and their configurations}
\label{theconfigs}
\end{center}
\end{table}

\begin{center}
    \begin{table}
\scalebox{0.9}{\small
\begin{tabular}{| c c c c c |}\hline
\cellcolor{lightgray} $1,2,3,4,5,6$ & $9,10,11,12,13,14$ & \cellcolor{lightgray} $18,19,20,21,26,27$ & $32,33,34,35,40,41$ & \cellcolor{lightgray} $46,47,48,49,54,55$\\ 
\cellcolor{lightgray} $1,2,3,4,7,8$ & $9,10,11,15,16,17$ & \cellcolor{lightgray} $18,19,22,23,30,31$ & $32,33,36,37,44,45$ & \cellcolor{lightgray} $46,47,50,51,58,59$\\
\cellcolor{lightgray} $1,2,5,6,7,8$ & $12,13,14,15,16,17$ & \cellcolor{lightgray} $18,19,24,25,28,29$ & $32,33,38,39,42,43$ & \cellcolor{lightgray}  $46,47,52,53,56,57$ \\
\cellcolor{lightgray}  &  & \cellcolor{lightgray} $20,21,22,23,28,29$ & $34,35,36,37,42,43$ & \cellcolor{lightgray} $48,49,50,51,56,57$\\
\cellcolor{lightgray}  &  & \cellcolor{lightgray} $20,21,24,25,30,31$ & $34,35,38,39,44,45$ & \cellcolor{lightgray} $48,49,52,53,58,59$\\
\cellcolor{lightgray}  &  & \cellcolor{lightgray} $22,23,24,25,26,27$ & $36,37,38,39,40,41$  & \cellcolor{lightgray} $50,51,52,53,54,55$\\
\cellcolor{lightgray}  &  & \cellcolor{lightgray} $26,27,28,29,30,31$ & $40,41,42,43,44,45$ & \cellcolor{lightgray} $54,55,56,57,58,59$\\\hline
\end{tabular}
}\caption{One set of $27$ tickets for $n=59$ balls using configuration $(B,C,E,E,E)$.\label{27tix}}\end{table}
\end{center}

\section{Conclusion} Using constraint programming, we calculated a large set of new lottery design numbers: minimal configurations of tickets that guarantee winning a prize under common lottery rules. In doing so we hope to have offered a basic blueprint for an approach to modelling combinatorial designs in CP, where it is clear that naive approaches would be unviable. Namely, we show the fruitfulness of eschewing a global model of a combinatorial design in favour of a local one. By this we mean that one should not seek to design the variables of one's model to satisfy the direct definition of the required design; but instead to focus attention on a well-chosen small piece of it---one that is characterised by a property relevant to detecting a global {\small\sc unsat} conclusion. Developing and analysing the constraints that the definitions place on the local data should then in reasonable time allow the solver to achieve {\small\sc unsat} or at least return a drastically reduced set of possibilities from which further analysis can proceed. In effect, what we propose is a general approach to symmetry breaking in combinatorial designs that is far more powerful than one would get from a typical lex-chain constraint. 

We note that there are dozens of different general types of designs in \cite{Hand} leading to tens of thousands of different constraint problems to model. We hope that deploying a local-based strategy of the sort described here could generate significant improvements to the state of knowledge thereof.

{\bf{Declarations:}} The authors are supported by the Leverhulme Trust Research Project Grant number RPG-2021-080. The authors have no competing interests to declare that are relevant to the content of this article.

{\bf{Acknowledgement:}} We would like to thank Leo Storme for helpful comments and corrections on an earlier version. Many thanks also to the referee and editor who helped hone the paper to be useful for the readership of this journal.
 
\AtNextBibliography{\footnotesize} 
\printbibliography

\end{document}

